\theoremstyle{definition}
\newtheorem{definition}{Definition}[section]
\newtheorem{theorem}[definition]{Theorem}%[section]
\newtheorem{lemma}[definition]{Lemma}%[section]
\newtheorem{proposition}[definition]{Proposition}%[section]
\newtheorem{corollary}[definition]{Corollary}%[section]
\newtheorem{claim}{Claim}
\numberwithin{equation}{section}
\newcommand{\C}{\mathbb{C}}
\newcommand{\N}{\mathbb{N}}
\newcommand{\Z}{\mathbb{Z}}
\def\spn{\mathrm{span}}
\def\Ind{\mathrm{Ind}}
\def\Im{\mathrm{Im}}
\def\ba{\textbf{a}}
\def\fb{\mathfrak{b}}
\def\a{\alpha}
\def\l{\lambda}
\def\ot{\otimes}
\def\Vir{\mathrm{Vir}}
\date{}%\today
\title{New irreducible tensor product modules for the Virasoro algebra}
\author{Xiangqian Guo, Xuewen Liu and Jing Wang}
\begin{document}

\maketitle
\begin{abstract} In this paper, we obtain a class of Virasoro modules by taking
tensor products of the irreducible Virasoro modules
$\Omega(\lambda,\alpha,h)$ defined in \cite{CG}, with irreducible
highest weight modules $V(\theta,h)$ or with irreducible Virasoro
modules Ind$_{\theta}(N)$ defined in \cite{MZ2}. We obtain the
necessary and sufficient conditions for such tensor product modules
to be irreducible, and determine the necessary and sufficient
conditions for two of them to be isomorphic. These modules are not
isomorphic to any other known irreducible Virasoro modules.

\vskip 11pt \noindent {\em Keywords: Virasoro algebra, tensor
products, non-weight modules, irreducible modules.}

\vskip 6pt \noindent {\em 2010  Math. Subj. Class.:} 17B10, 17B20,
17B65, 17B66, 17B68

\vskip 11pt
\end{abstract}

\section{Introduction}
Let $\C, \Z, \Z_+$ and $\N$ be the sets of all complexes, all
integers, all non-negative integers and all positive integers
respectively. The \textbf{Virasoro algebra} $\Vir$ is an infinite
dimensional Lie algebra over the complex numbers $\C$, with the
basis $\{d_i,c\,\, |\,\, i \in \Z\}$ and defining relations
%Let $\g$ be the Heisenberg-Virasoro algebra with basis $\{L_{n}, e_n,c_1,c_2,c_3\ |\ n\in\Z \}$
%where the $c_i$'s are central and the rest of the Lie brackets is given for $i,j \in \Z$ by
$$
[d_{i},d_{j}]=(j-i)d_{i+j}+\delta_{i,-j}\frac{i^{3}-i}{12}c, \quad
i,j \in \Z,
$$
$$
[c, d_{i}]=0, \quad i \in \Z.
$$
The algebra $\Vir$ is one of the most important Lie algebras both in
mathematics and in mathematical physics, see for example \cite{KR,
IK} and references therein. The representation theory of the
Virasoro algebra has been widely used in many physics areas and
other mathematical branches, for example, quantum physics \cite{GO},
conformal field theory \cite{FMS}, vertex operator algebras
\cite{LL}, and so on.

The theory of weight Virasoro modules with finite-dimensional weight
spaces (called Harish-Chandra modules) is fairly well developed (see
\cite{KR, FF} and references therein). In particular, a
classification of weight Virasoro modules with finite-dimensional
weight spaces was given by Mathieu \cite{M}, and a classification of
irreducible weight Virasoro modules with at least one finite
dimensional nonzero weight space was given in \cite{MZ1}. Later,
many authors constructed several classes of simple
non-Harish-Chandra modules, including simple weight modules with
infinite-dimensional weight spaces (see \cite{CGZ, CM, LLZ, LZ2})
and simple non-weight modules (see \cite{BM, LGZ, LLZ, LZ1, MW, MZ1,
TZ1, TZ2}).

In particular, taking tensor products of known irreducible modules
is an efficient way to construct new irreducible modules and can
help us understand the structures of the original modules. For
example, the tensor products of irreducible highest weight modules
and intermediate series modules were considered first by \cite{Zh}
and the irreducibility of these tensor modules are completely
determined by \cite{CGZ} and \cite{R}. Recently, another class of
tensor products between certain Omega modules defined and studied in
\cite{GLZ, LZ1} and some modules locally finite over a positive part
defined in \cite{MZ2} were studied in \cite{TZ1, TZ2}. The
irreducibilities and isomorphism classes of these modules are
determined.

The purpose of the present paper is to construct new irreducible
non-weight Virasoro modules by taking tensor products of irreducible
Virasoro modules defined in \cite{CG} and \cite{MZ2}. When considering modules
for the W algebra $W(2,2)$ which are free of rank-$1$ when restricted to the $0$
part of the algebra, the authors constructed a class of new irreducible $W(2,2)$-modules $\Omega(\l,\a,h)$ in \cite{CG2}. Since the Virasoro algebra is a natural subalgebra of $W(2,2)$, one can regard $\Omega(\l,\a,h)$ as Virasoro modules. The explicit structures of these Virasoro modules are investigated in \cite{CG}, and it is interesting that many of them remains irreducible as Virasoro modules. 

The $\Vir$-modules $\Omega(\l,\a,h)$ are quite different to and have more complicated structures than the previous modules $\Omega(\mu,b)$ define in \cite{GLZ, LZ1}, although we use similar notations for them. 
For example, the modules $\Omega(\mu,b)$ are only parameterized by two complexes $\mu$ and $b$, while the modules $\Omega(\l,\a,h)$ are parameterized by two complexes $\l, \a$ and an additional polynomial $h(t)\in\C[t]$; the modules $\Omega(\mu,b)$ are free of rank just $1$ over the Cartan subalgebra, while the modules $\Omega(\l,\a,h)$ are free of infinite rank; 
the modules $\Omega(\mu,b)$ are irreducible if and only if $b\neq0$, while the modules $\Omega(\l,\a,h)$ are irreducible if and only if $\deg(h)=1$ and $\a\neq0$; the reducible module $\Omega(\mu,0)$ has a unique submodule which has codimensional $1$, while the submodule structures of the module $\Omega(\l,\a,h)$ are much more complicated when they are reducible; the isomorphisms and automorphisms among the modules $\Omega(\mu,b)$ are almost trivial, while the isomorphisms and automorphisms among the modules $\Omega(\l,\a,h)$ are of various type (see Lemma 3.3 and Theorem 3.4).

In the present paper, we continue to study the Virasoro modules $\Omega(\l,\a,h)$. Our main tasks
are to show that the irreducible ones of the modules $\Omega(\l,\a,h)$ are new Virasoro modules and to
consider the tensor products of the modules $\Omega(\l,\a,h)$ and the modules with locally finite action of the positive part defined in \cite{MZ2}. The organization of this paper is as follows.
In section 2, we recall the definitions of the modules $\Omega(\lambda,\alpha,h)$,
$V(\theta,h)$ and $\Ind_{\theta}(N)$ and some known results from
\cite{CG} and \cite{MZ2}. In section 3, we obtain the irreducibility
of the tensor products $\Omega(\lambda,\alpha,h)\ot V$, where
$V=V(\theta,h)$ or $V=\Ind_{\theta}(N)$. Then we determine the necessary and sufficient
conditions for two irreducible tensor modules to be isomorphic. In
section 4, we compare the tensor products modules with all other
known non-weight irreducible modules and prove that they are new irreducible Virasoro modules
(in particular, the irreducible ones of $\Omega(\l,\a,h)$ are new). At
last in Section 5, we reformulate these modules as modules induced from
irreducible modules over some subalgebras of $\Vir$. 

In our subsequent paper \cite{GLW}, the main results in this paper are generalized to
tensor products of several Omega modules with the module $\Ind_{\theta}(N)$.

%Besides, we also prove that the tensor product of
%$\Omega(\lambda,\alpha,h)\otimes V$ with the class Whittaker module
%is isomorphic to the module
%$\Ind_{\theta,\lambda}(\mathbb{C}[t]_{\alpha,h,\ba})$.

\section{Preliminaries}
Let us first recall the definition of the Virasoro modules
$\Omega(\l, h, \a)$, $V(\theta,h)$ and $\Ind_{\theta}(N)$ and some
basic properties of them. Denote by $\C[t, s]$ the polynomial ring
in two variables $t$ and $s$.

\begin{definition}
Fix any $\lambda \in \C^*=\C\setminus\{0\}$, $\a \in \C$ and
$h(t)\in\C[t]$. Let $\Omega(\l, h, \a)=\C[t,s]$ as a vector space
and we define the $\Vir$-module action as follows:
\begin{equation*}
d_m\big(f(t)s^i\big)=
\l^m(s-m)^i\Big(\Big(s+mh(t)-m(m-1)\a\frac{h(t)-h(\a)}{t-\a}\Big)f(t)-m
(t-m\a)f'(t)\Big),
\end{equation*}
\begin{equation*}
c\big(f(t)s^i\big)=0, \;\; \forall\ \ m\in\Z, i\in\Z_+,
\end{equation*}
where $f\in\C[t]$ and $f'(t)$ is the derivative of $f$ with respect
to $t$.
\end{definition}

For convenience, we define the following operators %$g(t)=\frac{h(t)-h(\a)}{t-\a}$,
\begin{equation}\label{operator}
 F(f)=\frac{h(t)-h(\a)}{t-\a}f(t)-f'(t),\quad G(f)=h(\a)f+tF(f),\
 \forall\ f\in\C[t].
\end{equation}
then the module action on $\Omega(\l,h,\a)$ can be rewritten as
\begin{equation}\label{def}
 d_m\big(f(t)s^i\big)=\l^m(s-m)^i\Big(sf+m G(f)-m^2\a F(f)\Big),\quad\forall\ m\in\Z, i\in\Z_+, f\in\C[t].
\end{equation}

\begin{theorem}[\cite{CG}]\label{CG}
$\Omega(\l,\a,h)$ is simple if and only if $\deg(h)=1$ and
$\alpha\neq 0$.
\end{theorem}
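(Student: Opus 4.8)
The plan is to prove both implications by exploiting two structural features of the action \eqref{def}. First, $d_0$ acts as multiplication by $s$, since $d_0(f(t)s^i)=s^i\cdot sf=fs^{i+1}$; hence any submodule $W$ is stable under multiplication by $s$. Second, for a fixed $w$ the vector $\l^{-m}d_m w$ is a polynomial in $m$ with coefficients in $\Omega(\l,\a,h)$, of bounded degree, so a Vandermonde argument over finitely many integers $m$ places each of those coefficients in any submodule containing $w$. The basic extraction I would record is this: for a nonzero $w=\sum_{i=0}^{n}f_i(t)s^i\in W$ with $f_n\neq0$, expanding $(s-m)^i\bigl(sf_i+mG(f_i)-m^2\a F(f_i)\bigr)$ shows $\l^{-m}d_m w$ has degree $\le n+2$ in $m$, with $m^{n+2}$-coefficient equal to $(-1)^{n+1}\a\,F(f_n)$. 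Thus $\a F(f_n)\in W$. Since $\ker F\cap\C[t]=0$ whenever $\deg h\ge1$ (a polynomial solution of $f'=\frac{h(t)-h(\a)}{t-\a}f$ forces $f=0$ by comparing degrees), when $\a\neq0$ and $\deg h\ge1$ we obtain a nonzero element of $V_0:=W\cap\C[t]$. Applying the same extraction to $g\in V_0$ (the $i=0$ case) puts the $m^1$- and $m^2$-coefficients $G(g)$ and $-\a F(g)$ into $V_0$, so $V_0$ is a nonzero subspace of $\C[t]$ stable under both $F$ and $G$.

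For the \emph{if} direction, assume $\deg h=1$, say $h=at+b$ with $a\neq0$, and $\a\neq0$. Then $\frac{h(t)-h(\a)}{t-\a}=a$, so $F(f)=af-f'$ and therefore $D:=\tfrac{d}{dt}=a\cdot\mathrm{id}-F$ also preserves $V_0$. Applying $D$ repeatedly to any nonzero element of $V_0$ lowers its degree down to a nonzero constant, so $1\in V_0$. Because $G(f)=h(\a)f+tF(f)$ raises degree by exactly one with leading coefficient $a\neq0$, the family $\{G^k(1)\}_{k\ge0}$ contains polynomials of every degree and hence spans, giving $V_0=\C[t]$. Multiplying by powers of $s$ (that is, applying $d_0^i$) then yields $\C[t]s^i\subseteq W$ for all $i$, so $W=\Omega(\l,\a,h)$ and the module is simple.

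For the \emph{only if} direction I would dispose of the two easy cases by exhibiting proper nonzero submodules. If $\a=0$, then the $m^2$-term is absent and $G(f)=h(t)f-tf'$ maps $t\C[t]$ into itself, so $t\C[t,s]$ is a proper submodule. If $\deg h=0$ (any $\a$), then $F(f)=-f'$ and $G(f)=h(\a)f-tf'$, and for a constant $c$ one computes $d_m(cs^i)=\l^m(s-m)^i(s+mh(\a))c\in\C[s]$, so $\C[s]$ is a proper submodule.

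The main obstacle is the remaining case $\deg h\ge2$ with $\a\neq0$. Here $F$ raises degree by $\deg h-1\ge1$, so the implication used above, namely that an $F$- and $G$-stable $V_0$ must equal $\C[t]$, breaks down, and conversely reducibility must be established by producing a genuine proper nonzero submodule. I expect the real work to lie precisely in this construction: the naive candidates $t^N\C[t,s]$, $\C[s]$, or the $F$-orbit of a single vector are no longer invariant or no longer $G$-stable, so one must locate a proper nonzero subspace of $\C[t]$ simultaneously invariant under $F$ and $G$ and show that, together with the $s$-action, it generates a proper submodule. This is consistent with the paper's remark that the submodule lattice of $\Omega(\l,\a,h)$ becomes genuinely complicated in the reducible range, and it is where the delicate part of the argument is concentrated.
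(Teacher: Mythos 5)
Your ``if'' direction and your two easy reducible cases ($\a=0$, and $h$ constant) are correct, and they follow essentially the same Vandermonde-extraction strategy that this paper itself uses for Proposition 3.1 and Theorem 3.5. (Note that the paper never proves Theorem 2.2 internally: it is imported from \cite{CG}, so the only point of comparison inside the paper is the hint in the proof of Proposition 5.1.) However, your proof has a genuine gap exactly where you flag it: reducibility in the case $\deg(h)\geq 2$, $\a\neq 0$ is never established, so the ``only if'' direction is incomplete. Since this is one of the three cases needed for necessity, the argument as written does not prove the theorem.

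The missing construction is in fact short, and the paper points to it in the proof of Proposition 5.1: the proper submodule is $F(\C[t])\C[s]=\mathrm{span}\{F(f)s^j \mid f\in\C[t],\ j\in\Z_+\}$. The key point is the commutation identity $F(tu)=tF(u)-u$, which gives
\[
G(F(f))\;=\;h(\a)F(f)+tF\big(F(f)\big)\;=\;F\big(G(f)+f\big)\in F(\C[t]),
\]
that is, $G\circ F=F\circ(G+\mathrm{id})$ on $\C[t]$, so $F(\C[t])$ is stable under $G$ as well as under $F$. Then formula \eqref{def} shows $d_m\big(F(f)s^j\big)\in F(\C[t])\C[s]$ for all $m\in\Z$, so this is a submodule; it is nonzero, and it is proper because when $\deg(h)\geq 2$ the operator $F$ raises degree by $\deg(h)-1\geq 1$, so every nonzero element of $F(\C[t])$ has degree at least $1$ and hence $1\notin F(\C[t])\C[s]$. (This invariance argument does not even use $\a\neq0$; the degree-raising property of $F$ that you correctly identified as breaking your ``$V_0=\C[t]$'' step is precisely what makes $F(\C[t])$ a proper invariant subspace.) With this paragraph supplied, your proof would be complete.
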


Let $U :=U(\Vir)$ be the universal enveloping algebra of the
Virasoro algebra $\Vir.$ For any $\theta,h\in \mathbb{C},$ let
$I(\theta,h)$ be the left ideal of $U$ generated by the set
$$ \{d_{i}| i>0\}\bigcup \{d_{0}-h\cdot1, c-\theta \cdot 1\}.$$
The Verma module with highest weight $(\theta,h)$ for $\Vir$ is
defined as the quotient module $\bar{V}(\theta,h) :=U/I(\theta,h).$
It is a highest weight module of $\Vir$ and has a basis consisting
of all vectors of the form
$$ d^{k_{-1}}_{-1}d^{k_{-2}}_{-2}\cdots d^{k_{-n}}_{-n}v_{h}, \ k_{-1},k_{-2},\cdots ,k_{-n}\in \mathbb{Z}_{+},n\in \mathbb{N},$$
where $v_{h} = 1+I(\theta,h).$ Any nonzero scalar multiple of
$v_{h}$ is called a highest weight vector of the Verma module. Then
we have the irreducible highest weight module
$V(\theta,h)=\bar{V}(\theta,h)/J,$ where $J$ is the unique maximal
proper submodule of $\bar{V}(\theta,h).$ For the structure of
$V(\theta,h)$, please refer to \cite{FF} or \cite{A}.

Denote by $\Vir_{+}$ the Lie subalgebra of $\Vir$ spanned by all
$d_{i}$ with $i\geq 0.$ For $n\in \mathbb{Z}_{+},$ denote by
$\Vir^{(n)}_{+}$ the Lie subalgebra of $\Vir$ generated by all
$d_{i}$ for $i>n.$ For any $\Vir_{+}$ module $N$ and $\theta \in
\mathbb{C},$ consider the induced module
$\Ind(N):=U(\Vir)\otimes_{U(\Vir^{+})}N,$ and denote by
Ind$_{\theta}(N)$ the module Ind$(N)/(c-\theta)\text{Ind}(N).$ %From
%\cite{MZ2} we know that for irreducible $\Vir_{+}$ module $N,$ if
%exists $k\in \mathbb{N}$ such that $d_{k}$ acts injectively on $N$
%and $d_{i}N = 0$ for all $i>k,$ then Ind$_{\theta}(N)$
%is an irreducible $\Vir$ module for any $\theta \in \mathbb{C}.$
These modules are used to give a characterization of the irreducible
$\Vir$-modules such that the action of $d_k$ are locally finite for
sufficiently large $k$.

\begin{theorem}[\cite{MZ2}]\label{MZ2-1}
Assume that $N$ is an irreducible $\Vir_{+}$-module such that there
exists $k\in \N$ satisfying the following two conditions:
\begin{itemize}
\item[(a)] $d_{k}$ acts injectively on $N$;
\item[(b)] $d_{i}N=0$ for all $i>k$.
\end{itemize}
Then for any $\theta\in\C$ the $\Vir $ module $\Ind_{\theta}(N)$ is
simple.
\end{theorem}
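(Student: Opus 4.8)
The plan is to prove that every nonzero submodule $W$ of $\Ind_{\theta}(N)$ meets the canonical copy of $N$ nontrivially; once this is in hand, simplicity follows at once. By the PBW theorem we have $\Ind_{\theta}(N)\cong U(\Vir_{-})\otimes N$ as a vector space, where $\Vir_{-}=\spn\{d_{-i}\mid i\in\N\}$, and the natural map $v\mapsto 1\otimes v$ identifies $N$ with a $\Vir_{+}$-submodule $1\otimes N$ of $\Ind_{\theta}(N)$ (with $c$ acting as the scalar $\theta$). Thus $W\cap(1\otimes N)$ is a $\Vir_{+}$-submodule of $1\otimes N$; if it is nonzero, then irreducibility of $N$ forces $1\otimes N\subseteq W$, and since $\Ind_{\theta}(N)=U(\Vir_{-})(1\otimes N)$ is generated by $1\otimes N$, we conclude $W=\Ind_{\theta}(N)$.

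To produce a nonzero element of $W\cap(1\otimes N)$, I would equip the PBW monomials with a degree. For a monomial $x=d_{-1}^{i_{1}}d_{-2}^{i_{2}}\cdots d_{-r}^{i_{r}}$ set $\mathrm{wt}(x)=\sum_{j}j\,i_{j}$, extend this to a filtration of $\Ind_{\theta}(N)$ by the weight of the $\Vir_{-}$-part, and refine it to a total order (by weight, then by length $\sum_{j}i_{j}$, then lexicographically). Given $0\neq w\in W$, write $w=\sum_{x}x\otimes v_{x}$ with $v_{x}\in N$ and let $w_{\mathrm{top}}$ be its leading component. If the leading monomial has weight $0$, then $w\in 1\otimes N$ and we are done; otherwise the goal is to apply a raising operator that strictly decreases the degree while keeping the element nonzero, contradicting minimality of $\deg w$ among nonzero elements of $W$.

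The core computation is the action of $d_{m}$ with $m\geq k$ on a monomial. Commuting $d_{m}$ to the right through $d_{-j_{1}}\cdots d_{-j_{s}}$ and using $[d_{m},d_{-j}]=-(j+m)d_{m-j}+\delta_{m,j}\frac{m^{3}-m}{12}c$, the pass-through term equals $x\otimes d_{m}v$, which vanishes whenever $m>k$ by condition (b). Hence for $m>k$ the operator $d_{m}$ acts on the leading weight component purely as the lowering derivation $d_{-j}\mapsto -(j+m)d_{-(j-m)}$ of $U(\Vir_{-})$, dropping the weight by exactly $m$. Applying such operators reduces $w$ to an element whose leading $\Vir_{-}$-factors have small depth, and a final application of the injective operator $d_{k}$ pushes the remaining factors into $1\otimes N$; condition (a) guarantees that the resulting leading term does not collapse to zero. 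Iterating strictly lowers the degree, so the procedure terminates at a nonzero element of $1\otimes N$.

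The main obstacle is precisely this leading-term analysis: one must choose the total order so that, for the chosen raising operator, the images of distinct leading monomials do not cancel, and one must check that the coefficient produced by the $N$-action on the $v_{x}$ is nonzero. This is exactly where the hypotheses enter — condition (b) eliminates the uncontrolled contributions $d_{m}v$ for $m>k$, so that $d_{m}$ behaves as a clean lowering derivation on the associated graded, while the injectivity of $d_{k}$ in condition (a) prevents the final reduced element from becoming zero. With this reduction lemma established, the identification $W\cap(1\otimes N)\neq 0$ and the generation argument of the first paragraph yield $W=\Ind_{\theta}(N)$, proving that $\Ind_{\theta}(N)$ is simple.
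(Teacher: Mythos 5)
The paper does not prove this theorem at all --- it is quoted from \cite{MZ2} --- so your attempt can only be judged on its own merits and against Mazorchuk--Zhao's original argument, whose strategy yours mirrors: reduce simplicity to showing that every nonzero submodule $W$ meets $1\ot N$, then drive an arbitrary nonzero element of $W$ down into $1\ot N$ by applying well-chosen operators $d_m$ in the PBW picture $\Ind_\theta(N)\cong U(\Vir_-)\ot N$. Your reduction step (the first paragraph) is correct and complete. But the heart of the matter --- that one can always strictly decrease the degree while keeping the element nonzero --- is exactly what you defer: you write that ``one must choose the total order so that \dots\ the images of distinct leading monomials do not cancel'' and that ``one must check that the coefficient \dots\ is nonzero,'' and you never do either. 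For a general $w=\sum_x x\ot v_x$ with several PBW monomials and unrelated vectors $v_x\in N$, exhibiting an $m$ for which the top term of $d_mw$ provably survives is the entire content of the key lemma in \cite{MZ2} (where $m$ is taken of the form $k+\hat\jmath$, with $\hat\jmath$ read off from the maximal monomial of $w$, and one checks that the maximal monomial of $d_mw$ is inherited from a unique monomial of $w$). As it stands, what you have is a plan with the decisive step missing.

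Moreover, the one place where you commit to a specific move is wrong. You finish with ``a final application of the injective operator $d_k$,'' but condition (b) gives $d_iN=0$ only for $i>k$, so applying $d_k$ to, say, $d_{-j}\ot v$ yields $-(k+j)\,d_{k-j}\ot v+d_{-j}\ot d_kv$ (plus a central term when $j=k$), and the pass-through term $d_{-j}\ot d_kv$ is nonzero --- precisely by condition (a) --- and sits in the same filtration degree as the element you started from. Hence applying $d_k$ never pushes anything into $1\ot N$, and iterating it goes nowhere. The operator that actually closes the argument is $d_{k+j}$: its pass-through term dies by (b), while the commutator $[d_{k+j},d_{-j}]=-(k+2j)d_k$ is what produces $d_k$ acting injectively on $N$, landing you in $1\ot N$. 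In other words, $d_k$ must only ever appear as the \emph{output} of a commutator, never as the operator you apply; with that correction, and with the non-cancellation analysis actually carried out, your outline becomes the proof of \cite{MZ2}.
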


\begin{theorem}[\cite{MZ2}]\label{MZ2-2}
Let $V$ be an irreducible $\Vir$ module. Then the following
conditions are equivalent:
\begin{itemize}
\item[(1)] There exists $k\in\N$ such that $V$ is a locally finite $\Vir^{(k)}_{+}$-module;
 \item[(2)] There exists $n\in\N$ such that $V$ is a locally nilpotent $\Vir^{(n)}_{+}$-module;
\item[(3)] Either $V$ is a highest weight module or $V\cong \Ind_{\theta}(N)$ for some $\theta\in \C$, $k\in \N$ and an irreducible $\Vir_{+}$-module $N$
satisfying the conditions $(a)$ and $(b)$ in Theorem \ref{MZ2-1}.
\end{itemize}
\end{theorem}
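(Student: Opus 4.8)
My plan is to prove the cycle of implications $(3)\Rightarrow(1)\Rightarrow(2)\Rightarrow(3)$, which establishes all three equivalences at once. The implication $(3)\Rightarrow(1)$ is a direct verification in two cases. If $V=V(\theta,h)$ is an irreducible highest weight module, then $V$ is a weight module with finite-dimensional weight spaces and $d_{0}$-weights bounded above by $h$ (since $d_{j}$ shifts the $d_{0}$-weight by $j$); hence for any $w$, written as a finite sum of weight vectors, the space $U(\Vir^{(1)}_{+})w$ lies in the sum of the finitely many weight spaces with weight between that of $w$ and $h$, so it is finite-dimensional and $V$ is locally finite over $\Vir^{(1)}_{+}$. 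If $V\cong\Ind_{\theta}(N)$ with $N$ as in Theorem \ref{MZ2-1}, I would use the PBW filtration by degree in the negative part: on $N$ the subalgebra $\Vir^{(k)}_{+}$ acts trivially, and applying a generator $d_{j}$ with $j>k$ to a basis element produces only terms of strictly smaller negative degree (the terms in which $d_{j}$ reaches the $N$-factor with index still exceeding $k$ vanish). Thus the $\Vir^{(k)}_{+}$-orbit of any element involves only finitely many monomials of bounded degree in the negative part and finitely many monomials in $d_{0},\dots,d_{k}$ applied to its $N$-component, hence is finite-dimensional.

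For $(1)\Rightarrow(2)$, fix $v\in V$ and set $W:=U(\Vir^{(k)}_{+})v$, a finite-dimensional $\Vir^{(k)}_{+}$-stable subspace. The key is a structural lemma: because $\Vir^{(k)}_{+}$ is positively graded with one-dimensional homogeneous components and satisfies $[\Vir^{(k)}_{+},\Vir^{(k)}_{+}]=\Vir^{(2k+1)}_{+}$, every finite-dimensional quotient of $\Vir^{(k)}_{+}$ is nilpotent (the thin grading rules out semisimple or non-nilpotent solvable quotients). Thus the image $\bar{\fa}$ of $\Vir^{(k)}_{+}$ in $\gl(W)$ is nilpotent, hence solvable, and by Lie's theorem there is a flag of $W$ in which $\bar{\fa}$ is upper-triangular and its derived subalgebra $[\bar{\fa},\bar{\fa}]$---which is exactly the image of $\Vir^{(2k+1)}_{+}$---is strictly upper-triangular. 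Consequently any product of $\dim W$ elements of $\Vir^{(2k+1)}_{+}$ annihilates $v$, and letting $v$ range over $V$ shows that $V$ is locally nilpotent over $\Vir^{(2k+1)}_{+}$.

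The core of the theorem is $(2)\Rightarrow(3)$. Assume $V$ is locally nilpotent over $\Vir^{(n)}_{+}$ and consider, for each $k$, the subspace $T_{k}:=\{v\in V:d_{i}v=0 \text{ for all } i>k\}$. A short commutator computation shows each $T_{k}$ is a $\Vir_{+}$-submodule, and local nilpotence guarantees $T_{n}\neq 0$: a nonzero product $d_{i_{1}}\cdots d_{i_{r}}v_{0}$ of maximal length with all $i_{j}>n$ is killed by every $d_{i}$, $i>n$. Let $k$ be minimal with $T_{k}\neq0$. If $k\geq1$, then $d_{k}$ acts injectively on $T_{k}$, since any vector of $T_{k}$ killed by $d_{k}$ would lie in $T_{k-1}$, contradicting minimality. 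I would then extract from $T_{k}$ an irreducible $\Vir_{+}$-submodule $N$; it satisfies conditions $(a)$ and $(b)$ of Theorem \ref{MZ2-1} with this $k$, so $\Ind_{\theta}(N)$ is simple, where $\theta$ is the scalar by which the central element $c$ acts on $V$ (a scalar by Schur's lemma). The canonical homomorphism $\Ind_{\theta}(N)\to V$, $1\otimes w\mapsto w$, is nonzero, hence surjective by simplicity of $V$ and injective by simplicity of $\Ind_{\theta}(N)$; thus $V\cong\Ind_{\theta}(N)$. If instead $k=0$, the analogous extraction produces a genuine highest weight vector (a $d_{0}$-eigenvector annihilated by all $d_{i}$, $i>0$), which generates $V$ by irreducibility, so $V$ is a highest weight module.

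The main obstacle is precisely this extraction of an \emph{irreducible} $\Vir_{+}$-submodule $N$ from $T_{k}$. Viewed as a module over the finite-dimensional solvable algebra $\Vir_{+}/\Vir^{(k)}_{+}$, the (generally infinite-dimensional) space $T_{k}$ need not contain any irreducible submodule at all, so the injectivity of $d_{k}$ must be exploited to single out a suitable cyclic submodule whose induced map to $V$ is injective (in the case $k=0$ this amounts to showing that $d_{0}$ possesses an eigenvector in $T_{0}$). Verifying that the resulting $N$ genuinely meets the hypotheses of Theorem \ref{MZ2-1}, so that its simplicity may be invoked, is where the real difficulty lies; by comparison the remaining implications are formal.
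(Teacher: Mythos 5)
You should note at the outset that the paper contains no proof of this statement: it is quoted verbatim from \cite{MZ2}, so your outline has to be judged against the argument in that reference, whose skeleton (the cycle of implications, the subspaces $T_k$, the minimal $k$, the injectivity of $d_k$ on $T_k$) you have in fact reproduced correctly. The fatal problem is the step you defer: ``extract from $T_k$ an irreducible $\Vir_+$-submodule $N$'' is not a technical loose end but the entire content of the theorem, and in the form you propose it cannot be carried out --- as you observe yourself, $T_k$ is an infinite-dimensional module over the solvable algebra $\Vir_+/\Vir^{(k)}_+$, and such modules may have no irreducible submodules at all (the regular module $\C[x]$ over $\C[x]$ already has none), so there is nothing to extract. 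The workable argument runs backwards, and requires opening up the proof of Theorem \ref{MZ2-1} rather than citing it as a black box: for the minimal $k\geq 1$ the \emph{whole} space $T_k$ satisfies hypotheses (a) and (b) (you proved exactly this), and the computation behind Theorem \ref{MZ2-1} --- every nonzero submodule of the induced module meets $1\ot N$ --- uses only (a) and (b), never irreducibility of the inducing module. Applied to the kernel of the canonical map $\Ind_\theta(T_k)\to V$, which meets $1\ot T_k$ trivially, it yields $V\cong \Ind_\theta(T_k)$; irreducibility of $T_k$ then comes out \emph{afterwards}, since a proper nonzero $\Vir_+$-submodule $N'\subsetneq T_k$ would generate, by PBW freeness over $U(\spn\{d_i\ |\ i<0\})$, a proper nonzero submodule of $V$. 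Note also that $k=0$ cannot be treated ``analogously'', contrary to your last claim: conditions (a), (b) with $k=0$ do not force simplicity of the induced module (for $N=\C[d_0]$ with $d_0$ acting freely, $\Ind_\theta(N)$ has the proper submodule generated by $1\ot (d_0-a)\C[d_0]$), which is precisely why highest weight modules appear as a separate alternative in (3); producing a $d_0$-eigenvector in $T_0$ needs its own argument, e.g.\ playing the $\C[d_0]$-endomorphisms of $\Ind_\theta(\C[d_0])$ against Dixmier's version of Schur's lemma.

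There is a second gap, which you do not flag, in $(1)\Rightarrow(2)$. The claim that every finite-dimensional quotient of $\Vir^{(k)}_+$ is nilpotent is true, but your parenthetical justification is not a proof: a finite-codimensional ideal $I$ of $\Vir^{(k)}_+$ need not be graded, so ``the thin grading'' does not descend to the quotient at all; and since $d_0\notin\Vir^{(k)}_+$, there is no element whose adjoint action has the images of the $d_i$ as eigenvectors, so the easy spectral argument that works for $\Vir_+$ is also unavailable. What actually has to be shown is that every nonzero ideal of $\Vir^{(k)}_+$ contains a tail $\Vir^{(M)}_+$: for instance, realize $\Vir^{(k)}_+=t^{k+2}\C[t]\partial_t$ with $d_i=t^{i+1}\partial_t$, check that the ideal generated by $f\partial_t$ contains $f^2\C[t]\partial_t$ (bracketing with $hf\partial_t$, $h\in\C[t]$, gives $-f^2h'\partial_t$), and deduce that the monic generator $q$ of $\{g\in\C[t]\ |\ g\C[t]\partial_t\subseteq I\}$ satisfies $q\mid t^{k+2}q'$, forcing $q=t^M$. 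Only after such a lemma does nilpotency --- and with it your Lie-theorem flag argument and the local nilpotence of the derived subalgebra --- follow, from the grading of $\Vir^{(k)}_+/\Vir^{(M)}_+$. (A minor slip in the same step: $[\Vir^{(k)}_+,\Vir^{(k)}_+]=\Vir^{(2k+2)}_+$, not $\Vir^{(2k+1)}_+$, since $d_{2k+2}$ is not a bracket of two distinct $d_i$ with indices exceeding $k$.) Your implication $(3)\Rightarrow(1)$, by contrast, is correct as sketched.
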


In the rest of the paper, we will always fix some $\Vir$-module
$\Omega(\l,\a,h)$ with $\l\in\C^*, \a\in\C$ and $h\in\C[t]$, and an
irreducible $\Vir$-module $V$ such that each $d_{k}$ is locally
finite (equivalently, locally nilpotent) on $V$ for any positive
integer $k$ large enough. From Theorem \ref{MZ2-2}, we know that
either $V\cong V(\theta,h)$ for some $\theta,h\in \C$ or $V\cong
\Ind_{\theta}(N)$ as described in Theorem \ref{MZ2-1}. % If $\deg(h)=1$, we let $h=\xi t+\eta$.

\section{Irreducibility of the module $\Omega(\lambda,\alpha,h)\otimes V$}

In this section we will investigate the structure of the Virasoro
module $\Omega(\l,\a,h)\otimes V$ and in particular, we will
determine its irreducibility. The following technique lemma is
similar to Proposition 3.2 of \cite{CG}.

\begin{proposition}\label{technique}
Let $W$ be a subspace of $\Omega(\l,\a,h)\otimes V$ which is stable
under the action of any $d_m$ for $m$ sufficiently large. Take any
$w=\sum\limits_{i=0}^r a_i(t)s^i\otimes v_i\in W$ for some
$a_i(t)\in\C[t]$ and $v_i\in V$, then for any $0\leq j\leq r+2$ we
have
\begin{equation}\label{j=general}
\sum_{i=j-2}^r\Bigg(\binom{i}{j}a_is^{i-j+1}-\binom{i}{j-1}G(a_i)s^{i-j+1}-\binom{i}{j-2}\a
F(a_i)s^{i-j+2}\Bigg)\otimes v_i\in W,
\end{equation}
where the operators $F, G$ are defined in \eqref{operator} and we
make the convention that $\binom{0}{0}=1$ and $\binom{i}{j}=0$
whenever $j>i$ or $j<0$. In particular,
\begin{enumerate}
\item \label{j=0}
when $j=0$, we have $sw\in W$;
%
%\item \label{j=1}
%when $j=1$, we have
%$\sum\limits_{i=0}^d\big(ia_i-G(a_i)\big)s^{i}\otimes v_i \in M$;
%
%\item \label{j=d}
%when $j=d$, we have $\big(G(a_d)-a_d\big)s+\big(G(a_{d-1})+\a
%F(a_{d-2})\big)\otimes v_i\in M$;
%
\item \label{j=d+1}
when $j=r+1$, we have $G(a_r)\otimes v_r +\a F(a_{r-1})\otimes
v_{r-1}\in W$;

\item \label{j=d+2}
when $j=r+2$, we have $\a F(a_r)\otimes v_{r}\in W$.
\end{enumerate}
\end{proposition}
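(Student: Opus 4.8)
The plan is to feed a single large index $m$ into $w$, discard the contribution of the $V$-factor, and then treat the outcome as a polynomial identity in $m$. First I would choose $m$ large enough that simultaneously $W$ is $d_m$-stable and $d_mv_i=0$ for every $i=0,\dots,r$. The second requirement is available because, by Theorem \ref{MZ2-2}, $V$ is either an irreducible highest weight module or an induced module $\Ind_\theta(N)$ of the type in Theorem \ref{MZ2-1}; in both cases a fixed vector is annihilated by $d_m$ once $m$ passes a bound depending only on that vector (a weight bound in the highest weight case; for $\Ind_\theta(N)$ one commutes $d_m$ to the right and invokes condition (b) of Theorem \ref{MZ2-1}), and finitely many $v_i$ share a common bound. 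For such $m$ the Leibniz rule collapses $d_mw$ to $\sum_{i=0}^r d_m(a_is^i)\otimes v_i$; substituting \eqref{def} and dividing by the nonzero scalar $\lambda^m$ then shows that
\[
u_m:=\sum_{i=0}^r (s-m)^i\bigl(sa_i+mG(a_i)-m^2\alpha F(a_i)\bigr)\otimes v_i\in W .
\]

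The key step is to observe that $u_m$ is a polynomial in the integer $m$ of degree at most $r+2$ whose coefficients lie in $\Omega(\lambda,\alpha,h)\otimes V$ and are independent of $m$. Since $u_m\in W$ for all large $m$, picking $r+3$ distinct such values and inverting the resulting Vandermonde matrix exhibits each coefficient of $m^j$ as a $\C$-linear combination of the $u_m$, hence as an element of $W$. To identify these coefficients I would expand $(s-m)^i=\sum_p\binom{i}{p}(-m)^ps^{i-p}$ and read off the monomial $m^j$: it is produced by $p=j$ together with $sa_i$, by $p=j-1$ together with $mG(a_i)$, and by $p=j-2$ together with $-m^2\alpha F(a_i)$. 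Collecting these three contributions and factoring out the nonzero sign $(-1)^j$ reproduces exactly the left-hand side of \eqref{j=general}, with the stated range $i=j-2,\dots,r$ forced by the convention $\binom{i}{j}=0$ for $j>i$.

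Finally I would specialize $j$. For $j=0$ the sum collapses to $\sum_i a_is^{i+1}\otimes v_i=sw$, which is (1) (and, applied to any element of $W$ written in this form, it shows that such an element may be multiplied by $s$ while staying in $W$). For $j=r+2$ only the single term $\alpha F(a_r)\otimes v_r$ survives, which is (3). For (2), the case $j=r+1$ yields $G(a_r)\otimes v_r+r\alpha F(a_r)s\otimes v_r+\alpha F(a_{r-1})\otimes v_{r-1}$ up to the sign $(-1)^r$; I would then delete the unwanted middle term by applying (1) to the element $\alpha F(a_r)\otimes v_r\in W$ supplied by (3), which places $\alpha F(a_r)s\otimes v_r$ in $W$. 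I expect the only genuine work to be the binomial bookkeeping that pins down the coefficients and signs in \eqref{j=general}, together with the verification that the $V$-factor is killed for large $m$ so that $u_m$ is truly a polynomial in $m$; the Vandermonde extraction and the three special cases are then routine.
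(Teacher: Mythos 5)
Your proof is correct and follows essentially the same route as the paper's: expand $d_m w$ for $m$ large (so that the $V$-factor is annihilated and, after dividing by $\l^m$, the result is a polynomial in $m$ of degree at most $r+2$ with coefficients independent of $m$), then extract the coefficients of $m^j$ by a Vandermonde argument. The only place you go beyond the paper's terse write-up is in noting that the raw $j=r+1$ coefficient carries the extra term $r\a F(a_r)s\ot v_r$, which you correctly remove by combining items (1) and (3) — a needed detail that the paper passes over silently, but not a different approach.
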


\begin{proof} The element in \eqref{j=general} is just the
coefficient of $m^j$ if one expands $d_mw$ as a polynomial in $m$.
Then \eqref{j=general} follows by using the Vandermonde's
determinant.
\end{proof}

\begin{theorem}\label{irre} The module $\Omega (\lambda,\alpha,h)\otimes V$ is irreducible if and only if
$\Omega(\l,\a,h)$ is irreducible, or more precisely, if and only if
$\deg(h)=1$ and $\alpha\neq 0$.
\end{theorem}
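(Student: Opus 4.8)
The plan is to prove the two implications separately, with the forward implication (irreducibility of $\Omega(\l,\a,h)$ forcing irreducibility of the tensor product) carrying all the weight. The converse is immediate: if $\deg(h)\neq 1$ or $\a=0$, then by Theorem \ref{CG} the module $\Omega(\l,\a,h)$ has a proper nonzero submodule $W_0$, and then $W_0\ot V$ is a proper nonzero $\Vir$-submodule of $\Omega(\l,\a,h)\ot V$, since $d_m(W_0\ot V)\subseteq W_0\ot V$ and $(\Omega(\l,\a,h)/W_0)\ot V\neq 0$. Thus reducibility of $\Omega(\l,\a,h)$ passes to the tensor product, which is the contrapositive of the ``only if'' part.

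For the ``if'' part I would assume $\deg(h)=1$ and $\a\neq 0$, write $h(t)=pt+q$ with $p\neq 0$, so that $F(f)=pf-f'$ and $G(f)=h(\a)f+tF(f)$; in particular $F=p\cdot\mathrm{id}-\tfrac{d}{dt}$ is a bijection of $\C[t]$ and, acting only on the $\C[t]$-factor, a bijection of $\C[t]\ot V$. Let $W$ be a nonzero $\Vir$-submodule and set $W_0=W\cap(\C[t]\ot V)$, the part of $W$ of $s$-degree $0$. Writing any $0\neq w\in W$ as $w=\sum_{i,k}a_{ik}(t)s^i\ot v_{ik}$ with top $s$-degree $r$, the linear form of Proposition \ref{technique}(\ref{j=d+2}) gives $\a F(u_r)\in W$, where $u_r\in\C[t]\ot V$ is the $s^r$-coefficient; since $\a\neq 0$ and $F$ is injective this is a nonzero element of $W_0$, so $W_0\neq 0$.

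Next I would show $W_0$ is closed under the operators in play. Applying Proposition \ref{technique}(\ref{j=d+2}) and (\ref{j=d+1}) to an arbitrary $u\in W_0$ (which has $s$-degree $0$, so the $s$-degree $-1$ term is $0$) yields $\a F(u)\in W$ and $G(u)\in W$; as both lie in $\C[t]\ot V$ we get $F(u),G(u)\in W_0$. Consequently $tF(u)=G(u)-h(\a)u\in W_0$ and $\tfrac{d}{dt}u=pu-F(u)\in W_0$, so $W_0$ is stable under differentiation in $t$. Differentiating a nonzero $u=\sum_k g_k(t)\ot v_k\in W_0$ (with the $v_k$ linearly independent) exactly $N:=\max_k\deg g_k$ times collapses every $g_k$ to a constant and produces a nonzero element $1\ot v\in W_0$. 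Starting from $1\ot v$ and repeatedly using $F(t^k\ot v),\,tF(t^k\ot v)\in W_0$ one obtains $t^{k+1}\ot v\in W_0$ by induction, so $\C[t]\ot v\subseteq W_0$; multiplying by $s$ via Proposition \ref{technique}(\ref{j=0}) then gives $\C[t,s]\ot v=\Omega(\l,\a,h)\ot v\subseteq W$.

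Finally, for every $m$ and $x\in\Omega(\l,\a,h)$ we have $d_m(x\ot v)=d_mx\ot v+x\ot d_m v\in W$ with $d_mx\ot v\in\Omega(\l,\a,h)\ot v\subseteq W$, whence $x\ot d_m v\in W$; thus $\{v'\in V:\Omega(\l,\a,h)\ot v'\subseteq W\}$ is a nonzero $\Vir$-submodule of $V$, equal to $V$ by irreducibility, and therefore $W=\Omega(\l,\a,h)\ot V$. The hard part will be the reduction encoded in Proposition \ref{technique}: isolating, inside $d_m w\in W$ for large $m$, the $\l^m$-weighted polynomial-in-$m$ contribution of the $\Omega$-action from the $V$-contribution $\sum_i a_i s^i\ot d_m v_i$, so that $F$, $G$ and multiplication by $s$ genuinely preserve $W$. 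Once that is granted, the degree reduction in $t$ and the passage from $\Omega(\l,\a,h)\ot v$ to $\Omega(\l,\a,h)\ot V$ are routine.
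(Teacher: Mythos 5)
Your proof is correct and takes essentially the same route as the paper's: Proposition \ref{technique} with $j=r+2$ drops any element of a submodule $W$ to $s$-degree zero via injectivity of $F$, parts (\ref{j=d+1}) and (\ref{j=d+2}) show closure under $F$ and $G$ (hence under $d/dt$ and the $t$-action needed to generate all of $\C[t]$), part (\ref{j=0}) supplies multiplication by $s$, and the final step identifies $\{v'\in V:\Omega(\l,\a,h)\ot v'\subseteq W\}$ as a nonzero $\Vir$-submodule of $V$. The only deviations are cosmetic: you prove the easy ``only if'' direction explicitly (the paper treats it as obvious), and you replace the paper's minimality-of-$r$ argument by working in $W\cap(\C[t]\ot V)$ and differentiating a general element down to a pure tensor $1\ot v$, which if anything handles elements with several tensor components per $s$-degree more cleanly than the paper's notation does.
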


\begin{proof} We only need to prove the ``if part''. Suppose that
$\Omega(\l,\a,h)$ is irreducible, then by Theorem \ref{CG}, we have
$\deg(h)=1$ and $\alpha\neq 0$. Set $h(t)=\xi t+\eta$ for
convenience.

Let $W$ be a nonzero submodule of $\Omega(\lambda,\alpha,h)\otimes
V$. It is enough to show $W=\Omega(\l,\a,h)\otimes V$. Take any
nonzero element $w=\sum_{i=0}^r a_{i}s^{i}\otimes v_{i}\in W$ with
$a_i\in\C[t], v_{i}\in V$ such that $r\in\Z_+$ is minimal. By
Proposition \ref{technique}, we have $\a F(a_r)\otimes v_r\in W$.
Since $F(a_r)=\xi a_r-a_r'\neq 0$, by the minimality of $r$, we have
$r=0$ and hence $a_0\otimes v_0\in W$. Fix this $v_0$ and we denote
$$X=\{a\in\C[t,s]\ |\ a\otimes v_0\in W\}.$$
By Proposition \ref{technique} \eqref{j=d+1} and \eqref{j=d+2}, we
see that $f\in X\cap\C[t]$ implies that $F(f)=\xi f-f',
G(f)=h(\a)f+\xi tf-tf'\in X\cap\C[t]$, or, equivalently, $f', tf\in
X$. Using this, we can easily deduce that $\C[t]\subseteq X$ from
$0\neq a_0\in X\cap\C[t]$. Now Proposition \ref{technique}
\eqref{j=0} indicates that $X$ is stable under the multiplication by
$s$. Hence $X=\C[t,s]=\Omega(\l,\a,h)$. Now let
$$Y=\{v\in V\ |\ \Omega(\lambda,\alpha,h)\otimes v\in W\}.$$ Again $Y$ is nonzero and
the module action
$$d_{i}(a\otimes v)=d_ia\otimes v+a\otimes d_iv,\ \ \forall\ a\in\C[t,s], v\in Y$$ implies that $Y$
is a submodule of $V$. Hence $Y=V$ and $W=\Omega(\l,\a,h)\otimes V$,
as desired.
\end{proof}

Then we can determine the necessary and sufficient conditions for
two such irreducible modules to be isomorphic. Before doing this, we
first construct some isomorphisms.

Given any $\l,\a_1, \a_2\in\C^*$, $h_1=\xi_1 t+\eta_1, h_2=\xi_2
t+\eta_2\in\C[t]$ with $\a_1\xi_1=\a_2\xi_2\neq0$, we have the
irreducible modules $\Omega(\l,\a_1, h_1)$ and $\Omega(\l,\a_2,
h_2)$. We define the following sequences $\{b_i, i\in\Z_+\}$ of
complex numbers inductively by
$$b_0=1,\ \ b_1=0,\ \ \text{and}\ \ b_{i+1}=ib_i+i(\eta_2-\eta_1)b_{i-1},\ \forall\ i\in\N.$$
Then we have the following sequences of polynomials in the variable
$x$:
\begin{equation}\label{g_n-def}
g_n(x)=\sum_{i=0}^n{n\choose i}b_{n-i}x^i,\ \forall\ n\in\Z_+.
\end{equation}
The following identities can be easily calculated:
\begin{equation}\label{bn-property-1}
g'_n(x)=ng_{n-1}(x),
\end{equation}
\begin{equation}\label{bn-property-2}
\big(g_{n+1}(x)-xg_n(x)\big)-n\big(g_n(x)-xg_{n-1}(x)\big)=(\eta_2-\eta_1)ng_{n-1}(x).
\end{equation}

 Now we can define a linear map via
\begin{equation}\label{phi-def}
\phi:\ \Omega(\l,\a_1, h_1)\ \rightarrow\ \Omega(\l,\a_2, h_2),\
\phi(s^ih_1^n)=s^ig_n(h_2),\ \forall\ n,i\in\Z_+.
\end{equation}

\begin{lemma}\label{iso omega} Let notations as above, then $\phi$ is an isomorphism of $\Vir$-modules.
\end{lemma}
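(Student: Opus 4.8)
The plan is to verify in turn that $\phi$ is a linear bijection and that it intertwines the two $\Vir$-actions. Since $\xi_1=h_1'$ and $\xi_2=h_2'$ are forced to be nonzero by $\a_1\xi_1=\a_2\xi_2\neq0$, the polynomials $h_1,h_2$ have degree one, so $\C[t]=\C[h_1]=\C[h_2]$ and both $\{s^ih_1^n\}$ and $\{s^ih_2^n\}$ are bases of $\C[t,s]$. Because the coefficient of $x^n$ in $g_n$ is $b_0=1$, each $g_n$ is monic of degree $n$, so $\{g_n(h_2):n\in\Z_+\}$ is again a basis of $\C[t]$ and hence $\{s^ig_n(h_2)\}$ is a basis of $\Omega(\l,\a_2,h_2)$. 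Thus $\phi$ carries a basis bijectively to a basis and is a linear isomorphism. Writing $\psi\colon\C[t]\to\C[t]$, $\psi(h_1^n)=g_n(h_2)$, for the restriction of $\phi$ to the $s^0$-part, one checks directly that $\phi(s^if)=s^i\psi(f)$ for all $f\in\C[t]$ and $i\in\Z_+$; in particular $\phi$ commutes with multiplication by any polynomial in $s$ alone.

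The key simplification is that the action formula \eqref{def} factors as $d_m(fs^i)=(s-m)^id_m(f)$, since the factor $(s-m)^i$ is pulled out and the bracket depends only on $f$. Because $\phi$ commutes with multiplication by $(s-m)^i$, it suffices to prove the intertwining property on the generators $h_1^n$ of the $s^0$-part, i.e.\ to show $\phi\big(d_m(h_1^n)\big)=d_m\big(g_n(h_2)\big)$ for all $m\in\Z$, $n\in\Z_+$, where the left $d_m$ is the $\Omega(\l,\a_1,h_1)$-action and the right one the $\Omega(\l,\a_2,h_2)$-action. Expanding both sides by \eqref{def} and comparing the coefficients of $m^0,m^1,m^2$ (both sides are quadratic polynomials in $m$, so equality for all integers $m$ is equivalent to equality of these coefficients), the $m^0$-terms agree automatically as $s\,g_n(h_2)$, and the remaining content is the pair of operator identities on $\C[t]$
\begin{equation*}
\psi\big(G_1(h_1^n)\big)=G_2\big(g_n(h_2)\big),\qquad \a_1\,\psi\big(F_1(h_1^n)\big)=\a_2\,F_2\big(g_n(h_2)\big),
\end{equation*}
where $F_j,G_j$ denote the operators \eqref{operator} built from $(\a_j,h_j)$.

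For the $F$-identity I would use that $F_j(g)=\xi_j g-g'$ for the linear $h_j$, whence $F_1(h_1^n)=\xi_1(h_1^n-nh_1^{n-1})$ and, via \eqref{bn-property-1}, $F_2(g_n(h_2))=\xi_2\big(g_n(h_2)-ng_{n-1}(h_2)\big)$; applying $\psi$ to the former gives $\xi_1\big(g_n(h_2)-ng_{n-1}(h_2)\big)$, so the $F$-identity reduces exactly to $\a_1\xi_1=\a_2\xi_2$, which is the standing hypothesis. The $G$-identity is where the real work lies and is the step I expect to be the main obstacle. Writing $t=(h_j-\eta_j)/\xi_j$ in $G_j(f)=h_j(\a_j)f+tF_j(f)$, a short expansion gives $G_1(h_1^n)=h_1^{n+1}+(\a_1\xi_1-n)h_1^n+n\eta_1 h_1^{n-1}$, so that $\psi(G_1(h_1^n))=g_{n+1}(h_2)+(\a_1\xi_1-n)g_n(h_2)+n\eta_1 g_{n-1}(h_2)$, while the parallel expansion yields $G_2(g_n(h_2))=\a_2\xi_2\,g_n(h_2)+h_2g_n(h_2)-nh_2g_{n-1}(h_2)+n\eta_2 g_{n-1}(h_2)$. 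Taking the difference and cancelling the equal terms $\a_1\xi_1=\a_2\xi_2$, the remainder is exactly
\begin{equation*}
\big(g_{n+1}(h_2)-h_2g_n(h_2)\big)-n\big(g_n(h_2)-h_2g_{n-1}(h_2)\big)-(\eta_2-\eta_1)ng_{n-1}(h_2),
\end{equation*}
which vanishes by \eqref{bn-property-2} evaluated at $x=h_2$. This is precisely the relation that the recursion $b_{i+1}=ib_i+i(\eta_2-\eta_1)b_{i-1}$ defining the $b_i$ was engineered to produce, so the whole argument ultimately rests on \eqref{bn-property-1}--\eqref{bn-property-2}, which in turn follow directly from that recursion and \eqref{g_n-def}. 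Once both operator identities are in hand, the reduction of the second paragraph shows $\phi$ is a $\Vir$-module homomorphism, and combined with the bijectivity of the first paragraph this completes the proof.
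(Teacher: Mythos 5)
Your proof is correct, and its core is the same as the paper's: the intertwining property is reduced to the two operator identities $\phi(G_1(h_1^n))=G_2(g_n(h_2))$ and $\phi(\a_1F_1(h_1^n))=\a_2F_2(g_n(h_2))$, which are then verified from the expansions in \eqref{FGh} together with \eqref{bn-property-1} and \eqref{bn-property-2} --- this is precisely Claim~1 of the paper's proof and the chain of equalities for $\phi(d_m(s^ih_1^n))$ that follows it; your preliminary step of factoring $d_m(fs^i)=(s-m)^i d_m(f)$ and using that $\phi$ commutes with multiplication by polynomials in $s$ is a tidy reorganization of that same computation rather than a different idea. Where you genuinely diverge is bijectivity. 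The paper never checks it directly: since the standing hypotheses give $\a_1\xi_1=\a_2\xi_2\neq0$, hence $\deg(h_i)=1$ and $\a_i\neq0$, both $\Omega(\l,\a_i,h_i)$ are irreducible by Theorem~\ref{CG}, so a nonzero homomorphism between them is automatically an isomorphism. You instead observe that each $g_n$ is monic of degree $n$ (its leading coefficient is $b_0=1$), so $\{s^ig_n(h_2)\}$ is again a basis of $\C[t,s]$ and $\phi$ carries a basis to a basis. Your route is more self-contained: it avoids invoking the irreducibility theorem, it would survive in settings where irreducibility is unavailable, and it makes explicit the point (left implicit in the paper) that $\phi$ is even well defined because $\{s^ih_1^n\}$ is a basis of $\C[t,s]$ when $\deg(h_1)=1$. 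The paper's Schur-type argument is shorter. Both arguments are complete.
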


\begin{proof} Denote the operators in \eqref{operator} as $F_i$ and $G_i,
i=1,2$ for corresponding modules, then it is easy to see that, for
all $n\in\Z_+, i=1,2$,
\begin{equation}\label{FGh}
F_i(h_i^n)=\xi_ih_i^n-n\xi_ih_i^{n-1},\ \text{and}\
G_i(h_i^n)=h_i^{n+1}+(\a_i\xi_i-n) h_i^n+n\eta_ih_i^{n-1}.
\end{equation}

\noindent\textbf{Claim 1.} $\phi(\a_1F_1(h_1^n))=\a_2F_2(g_n(h_2))$
and $\phi(G_1(h_1^n))=G_2(g_n(h_2))$ for all $n\in\Z_+$.

This can be verified straightforward. For example, the second
formula follows from \eqref{bn-property-1}, \eqref{bn-property-2}
and the following calculations:
\begin{equation*}\begin{split}
\phi(G_1(h_1^n))= & \phi\big(h_1^{n+1}+(\a_1\xi_1-n) h_1^n+n\eta_1h_1^{n-1}\big)\\
                = & g_{n+1}(h_2)+(\a_2\xi_2-n) g_{n}(h_2)+n\eta_1g_{n-1}(h_2)
\end{split}\end{equation*}
and
\begin{equation*}\begin{split}
G_2(g_n(h_2))%= & t\xi_2 g_n(h_2)-t\xi_2 g_n'(h_2)+h_2(\a_2)g_n(h_2)\\
             = & h_2g_n(h_2)+\a_2\xi_2g_n(h_2)-h_2g'_n(h_2)+\eta_2g'_n(h_2)\\
             = & h_2g_n(h_2)+\a_2\xi_2g_n(h_2)-nh_2g_{n-1}(h_2)+n\eta_2g_{n-1}(h_2).\\
\end{split}\end{equation*}
Now we see that
\begin{equation*}\begin{split}
\phi(d_m(s^ih_1^n))= & \phi(\l^m(s-m)^i(sh_1^n+mG_1(h_1^n)-m^2\a_1F_1(h_1^n)))\\
                   = & \l^m(s-m)^is\big(\phi(h_1^n)+m\phi(G_1(h_1^n))-m^2\phi(\a_1F_1(h_1^n))\big)\\
                   = & \l^m(s-m)^i\big(sg_n(h_2)+mG_2(g_n(h_2))-m^2\a_2F_2(g_n(h_2))\big)\\
                   = & d_m(s^ig_n(h_2))=d_m(\phi(s^ih_1^n)).
\end{split}\end{equation*}
That is, $\phi$ is a nonzero homomorphism between the irreducible
modules $\Omega(\l,\a_1, h_1)$ and $ \Omega(\l,\a_2, h_2)$ and hence
an isomorphism.
\end{proof}

\begin{theorem}\label{iso} Let $\lambda_i\in \mathbb{C}^{*}$, $\deg(h_i) = 1$ and $\alpha_i\neq 0$, where $i=1,2$.
Let $V_1, V_2$ be two irreducible modules over $\Vir$ such that the
action of $d_{k}$ is locally finite on both of them for sufficiently
large $k\in\Z_+$. Then $\Omega(\lambda_1,\alpha_1,h_1)\otimes V_1$
and $\Omega(\lambda_2,\alpha_2,h_2)\otimes V_2$ are isomorphic as
$\Vir$ modules if and only if $\l_1=\l_2, \a_1\xi_1=\a_2\xi_2$ and
$V_1 \cong V_2$ as $\Vir$ modules. Moreover, any such isomorphism is
of the form:
$$\phi\ot\tau:\ \ \Omega(\lambda_1,\alpha_1,h_1)\ot V_1\rightarrow \Omega(\l_1,\a_2,h_2)\ot V_2,\ \ f\ot v\mapsto \phi(f)\ot \tau(v),\ \forall\ f\in\C[t,s], v\in V_1,$$
where $\phi$ is defined as in \eqref{phi-def} and $\tau$ is an
isomorphism between $V_1$ and $V_2$.
\end{theorem}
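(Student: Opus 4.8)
The plan is to prove both directions of the isomorphism criterion, with the "if" direction being constructive and the "only if" direction being the real work. For the "if" direction, suppose $\lambda_1=\lambda_2$, $\alpha_1\xi_1=\alpha_2\xi_2$, and $V_1\cong V_2$. Then Lemma~\ref{iso omega} gives an isomorphism $\phi:\Omega(\lambda_1,\alpha_1,h_1)\to\Omega(\lambda_1,\alpha_2,h_2)$ (note the shared $\lambda$), and we fix an isomorphism $\tau:V_1\to V_2$. I would check that $\phi\otimes\tau$ intertwines the $\Vir$-action by the standard tensor-product formula $d_m(f\otimes v)=d_mf\otimes v+f\otimes d_mv$: since $\phi$ and $\tau$ are each homomorphisms on their factors, $(\phi\otimes\tau)(d_m(f\otimes v))=d_m\phi(f)\otimes\tau(v)+\phi(f)\otimes d_m\tau(v)=d_m(\phi(f)\otimes\tau(v))$, and bijectivity is immediate. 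This is routine.

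**The substance is the "only if" direction.** Assume $\Psi:\Omega(\lambda_1,\alpha_1,h_1)\otimes V_1\to\Omega(\lambda_2,\alpha_2,h_2)\otimes V_2$ is an isomorphism; I must extract $\lambda_1=\lambda_2$, $\alpha_1\xi_1=\alpha_2\xi_2$, $V_1\cong V_2$, and the stated form of $\Psi$. My strategy is to exploit the eigenvalue structure of a well-chosen element. The action of $d_m$ carries a factor $\lambda^m$ and a factor $(s-m)^i$; the key observation is that the operator $s$ (which by Proposition~\ref{technique}\eqref{j=0} stabilizes submodules, and more usefully acts naturally on these modules) together with the $\lambda^m$-scaling distinguishes the modules. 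First I would identify how $\lambda_i$ is recovered as a spectral/asymptotic invariant: examining the $m\to\infty$ growth rate of $d_m$ on $\Psi$-images forces $\lambda_1=\lambda_2=:\lambda$. Then I would use Proposition~\ref{technique} applied to the graph/image of $\Psi$ (or to cleverly chosen elements) to pin down the leading-order behavior governed by the operator $\alpha F$, extracting $\alpha_1\xi_1=\alpha_2\xi_2$ as the invariant since $F(t^k)$ has leading coefficient $\xi$ and $\alpha F$ is what appears in the top ($m^2$) coefficient.

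**To establish that $\Psi$ must factor as $\phi\otimes\tau$**, I would argue that $\Psi$ respects the tensor decomposition. Concretely, consider the filtration or the canonical map and show that $\Psi(1\otimes v)$ must lie in $1\otimes V_2$ (or more generally that $\Psi$ sends the "$V$-direction" to the "$V$-direction"). The mechanism: the subspace $\C[t,s]\otimes v$ for fixed $v$ is characterized internally via the locally-finite/locally-nilpotent action of large $d_k$ on the $V$-factor versus the polynomial action on $\Omega$, so $\Psi$ must preserve this structure. Once I know $\Psi$ descends to a map on the $V$-factor, that induced map is the required isomorphism $\tau:V_1\to V_2$, and the restriction to the $\Omega$-factor is an isomorphism $\Omega(\lambda,\alpha_1,h_1)\to\Omega(\lambda,\alpha_2,h_2)$; by Lemma~\ref{iso omega} and the (already proven) rigidity of such isomorphisms, this restriction coincides with $\phi$ up to the freedom accounted for by $\alpha_1\xi_1=\alpha_2\xi_2$.

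**The hard part** will be showing that an arbitrary abstract isomorphism $\Psi$ is forced into the product form $\phi\otimes\tau$ rather than mixing the two tensor factors in some twisted way. Establishing $\lambda_1=\lambda_2$ and $\alpha_1\xi_1=\alpha_2\xi_2$ as numerical invariants is comparatively mechanical once the right invariant elements are chosen, but ruling out "entangled" isomorphisms requires carefully using that $V$ has no polynomial-type free action while $\Omega$ is free of infinite rank over the Cartan data — I expect to lean on Proposition~\ref{technique} to isolate pure tensors $\alpha F(a_r)\otimes v_r$ and propagate the factorization from the top-degree term downward, exactly as in the proof of Theorem~\ref{irre}.
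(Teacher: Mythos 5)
Your ``if'' direction matches the paper's, and your outline for the ``only if'' direction (recover $\lambda$ and $\alpha\xi$ as numerical invariants, then force the factorization into $\phi\otimes\tau$) has the right shape, but the mechanisms you propose for both substantive steps do not work, and the one idea that makes the paper's proof go through is missing. That idea is: since $d_k$ is locally nilpotent on $V_1$ and $V_2$ for large $k$, one may choose $v$, and the finitely many $w_i$ appearing in $\varphi(1\otimes v)=\sum_{i=0}^n a_is^i\otimes w_i$ (your $\Psi$ is the paper's $\varphi$), so that all are annihilated by every $d_m$ with $m\geq K$. Then $d_m$ acts on $1\otimes v$ purely through the $\Omega$-factor, giving the explicit identity $(\lambda_1^{-m-1}d_{m+1}-\lambda_1^{-m}d_m)(1\otimes v)=\big(h_1(t)-2m\xi_1\alpha_1\big)(1\otimes v)$; differencing two such operators makes $1\otimes v$ an eigenvector of a concrete element of $U(\Vir)$ with eigenvalue $2(m-l)\xi_1\alpha_1$. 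Transporting this single equation through $\varphi$ and comparing coefficients — of $s^{n+1}\otimes w_n$ to force $\lambda_1=\lambda_2$, of $l^{n+1}$ to force $n=0$ (using $F_2(a_n)=\xi_2a_n-a_n'\neq0$), and the remaining terms to get $\alpha_1\xi_1=\alpha_2\xi_2$ and $a_0\in\C$ — is what simultaneously yields all three conclusions, including that $\varphi(1\otimes v)\in\C\otimes V_2$. Your proposed substitutes cannot do this job: ``the $m\to\infty$ growth rate of $d_m$'' is not a well-defined invariant here because $d_m$ acts on both tensor factors at once, and Proposition \ref{technique} is inapplicable — the image of $\varphi$ is the whole module, and the graph of $\varphi$ lies in a direct sum of two tensor modules, a situation the proposition does not cover.

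The factorization step has two further gaps. First, your claim that $\C[t,s]\otimes v$ is ``characterized internally'' by the locally finite action of large $d_k$ is unsupported and implausible: on $\C[t,s]\otimes v$ (with $d_kv=0$) the large $d_k$ act through the $\Omega$-factor, which is \emph{not} locally finite, and you offer no actual characterization. The paper never characterizes this subspace abstractly; it proves by induction (its Claims 2 and 3) that $\varphi(s^ih_1^j\otimes v)=s^ig_j(h_2)\otimes w$, where comparing the $m$- and $m^2$-coefficients of the $d_m$-action produces the recursions \eqref{g_n-1} and \eqref{g_n-2}, whose unique solution is exactly the family $g_j$ of \eqref{g_n-def}; this identifies the restriction of $\varphi$ with $\phi$ constructively. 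Second, your appeal to the ``already proven rigidity'' of isomorphisms $\Omega(\lambda,\alpha_1,h_1)\to\Omega(\lambda,\alpha_2,h_2)$ is circular: Lemma \ref{iso omega} only constructs one isomorphism $\phi$ and says nothing about uniqueness, and the rigidity statement is precisely the Corollary that the paper deduces \emph{from} the proof of this theorem. Relatedly, even granting a factorization $\varphi=\psi\otimes\tau$ into linear bijections, you still owe an argument that $\tau$ is a $\Vir$-map; in the paper this falls out (Claim 4) only because $\psi$ has first been identified with the module isomorphism $\phi$, whence $\phi(f)\otimes\big(\tau(d_mv)-d_m\tau(v)\big)=0$ for all $f,v,m$.
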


\begin{proof} The sufficiency of the theorem follows from Lemma \ref{iso omega}. We need only
to prove the necessity. Let $\varphi$ be a $\Vir$-module isomorphism
from $\Omega(\lambda_1,\alpha_1,h_1)\otimes V_1$ to
$\Omega(\lambda_2,\alpha_2,h_2)\otimes V_2$. Take a nonzero element
$v\in V_1$. Suppose
  $$ \varphi(1\otimes v) = \sum^{n}_{i=0}a_{i}s^{i}\otimes w_{i},$$
where $a_i\in\C[t], w_{i}\in V_2$ with $a_n\neq0, w_{n} \neq 0$.

\noindent\textbf{Claim 1.} $n=0, \l_1=\l_2$ and
$\a_1\xi_1=\a_2\xi_2$.

There is a positive integer $K$ such that $d_{m}(v) = d_{m}(w_{i}) =
0$ for all $m \geq K$ and $0 \leq i \leq n$. Taking any $m\geq K$,
we have
  $$(\lambda_{1}^{-m-1}d_{m+1}-\lambda_{1}^{-m}d_{m})(1\otimes v) = \big(h_1(t)-2m\xi_1\alpha_1\big)(1\otimes v).$$
Replacing $m$ with another $l\geq K$ and making the difference of
them, we get
$$\Big(\big(\lambda_{1}^{-l-1}d_{l+1}-\lambda_{1}^{-l}d_{l}\big)-\big(\lambda_{1}^{-m-1}d_{m+1}-\lambda_{1}^{-m}d_{m}\big)\Big)(1\otimes v)
 = 2(m-l)\xi_1\alpha_1(1\otimes v).$$
Then applying $\varphi$, we obtain,
  \begin{equation*}
  \begin{split}
  &2(m-l)\xi_1\alpha_1\sum^{n}_{i=0}a_{i}s^{i}\otimes w_{i} \\
  = & \Big(\big(\lambda_{1}^{-l-1}d_{l+1}-\lambda_{1}^{-l}d_{l}\big)-\big(\lambda_{1}^{-m-1}d_{m+1}-\lambda_{1}^{-m}d_{m}\big)\Big)\sum^{n}_{i=0}a_{i}s^{i}\otimes w_{i} \\
  = & \sum^{n}_{i=0}(\lambda_2/ \lambda_1)^{l+1}(s-l-1)^{i}\big(sa_{i}+(l+1)G_2(a_i)-(l+1)^2\a_2 F_2(a_i)\big)\otimes w_i\\
    & \hskip10pt-\sum^{n}_{i=0}(\lambda_2/ \lambda_1)^{l}(s-l)^{i}\big(sa_{i}+lG_2(a_i)-l^2\a_2 F_2(a_i)\big)\otimes w_i \\
    & -\sum^{n}_{i=0}(\lambda_2/ \lambda_1)^{m+1}(s-m-1)^{i}\big(sa_{i}+(m+1)G_2(a_i)-(m+1)^2\a_2 F_2(a_i)\big)\otimes w_i\\
    & \hskip10pt+\sum^{n}_{i=0}(\lambda_2/ \lambda_1)^{m}(s-m)^{i}\big(sa_{i}+mG_2(a_i)-m^2\a_2 F_2(a_i)\big)\otimes w_i.
  \end{split}
  \end{equation*}
Comparing the coefficients of $s^{n+1}\otimes w_n$ in the above
equation, we can deduce that
  $$\big((\lambda_2/\lambda_1)^{l}-(\lambda_2/\lambda_1)^{m}\big)(\lambda_2/\lambda_1-1)a_{n} = 0,\ \forall\ m,l\geq K,$$
forcing $\lambda_1 = \lambda_2.$ Then the previous equation can be
simplified as
  \begin{equation*}
  \begin{split}
  &2(m-l)\xi_1\alpha_1\sum^{n}_{i=0}a_{i}s^{i}\otimes w_{i} \\
  = & \sum^{n}_{i=0}(s-l-1)^{i}\big(sa_{i}+(l+1)G_2(a_i)-(l+1)^2\a_2 F_2(a_i)\big)\otimes w_i\\
    & \hskip10pt-\sum^{n}_{i=0}(s-l)^{i}\big(sa_{i}+lG_2(a_i)-l^2\a_2 F_2(a_i)\big)\otimes w_i \\
    & -\sum^{n}_{i=0}(s-m-1)^{i}\big(sa_{i}+(m+1)G_2(a_i)-(m+1)^2\a_2 F_2(a_i)\big)\otimes w_i\\
    & \hskip10pt+\sum^{n}_{i=0}(s-m)^{i}\big(sa_{i}+mG_2(a_i)-m^2\a_2 F_2(a_i)\big)\otimes
    w_i.
  \end{split}
  \end{equation*}
Regard it as a polynomial in $m, l\geq K$ with coefficients in
$\Omega_2(\l_2,\a_2,h_2)$. If $n\geq 1$, considering the
coefficients of $l^{n+1}$ gives $\alpha_2
F(a_n)=\a_2(\xi_2a_{n}-a'_{n})=0$, contradicting the fact $a_n\neq
0$. So $n = 0.$ Hence we have
\begin{equation*}
 2(m-l)\a_1\xi_1a_{0}\otimes w_{0}  = 2(m-l)\a_2 F_2(a_0)\otimes
 w_0,\ \forall\ m, l\geq K,
\end{equation*}
that is, $\a_1\xi_1 a_0 =\a_2 \xi_2 a_0-\a_2 a_0'$. Therefore we
obtain that $\a_1\xi_1=\a_2\xi_2$ and $a_0\in\C$. Without loss of
generality, we assume that $a_0=1$. Denote $w_0=w$ and
$\l_1=\l_2=\l$ in what follows.

\noindent\textbf{Claim 2.} There exist polynomials $g_j(h_2)$ in
$h_2$ such that $\varphi(h_1^{j}\ot v)=g_{j}(h_2)\ot w,\ \forall\
j\in\Z_+$.

The claim is clear true for $j=0$ with $g_0(h_2)=1$. Now suppose the
claim holds for non-negative integers no larger than some
$j\in\Z_+$, then for $m\geq K$, we have
\begin{equation*}\begin{split}
\varphi(d_m(h_1^j\ot v))= & \varphi\Big(\l^m\big(sh_1^j+mG_1(h_1^j)-m^2\a_1F_1(h_1^j)\big)\ot v\Big)\\
                        = & d_m\varphi(h_1^j\ot v)= d_m(g_j(h_2)\ot w)\\
                        = & \l^m\big(sg_j(h_2)+mG_2(g_j(h_2))-m^2\a_2F_2(g_j(h_2))\big)\ot w.
%                        = & \l^m(s-m)^is\phi(h_1^n)+m\phi(G_1(h_1^n))-m^2\phi(\a_1F_1(h_1^n))\\
\end{split}\end{equation*}
Regarding the expressions in the above equation as polynomials in
$m$ and comparing the coefficients of $m^2$ and $m$, we deduce by
\eqref{FGh} that % $\phi(G_1(h_1^n))=G_2(g_n(h_2))$ and $\phi(F_1(h_1^n))=F_2(g_n(h_2))$, or more precisely,
\begin{equation}\label{phiF}\begin{split}
    & \varphi(\a_1F_1(h_1^j)\ot v) = \a_1\varphi\Big(\big(\xi_1h_1^j-j\xi_1h_1^{j-1}\big)\ot v\Big)\\
  = & \a_2F_2(g_j(h_2))\ot w =\a_2\big(\xi_2g_j(h_2)-\xi_2g_j'(h_2)\big)\ot w
\end{split}\end{equation}
and
\begin{equation}\label{phiG}\begin{split}
&  \varphi(G_1(h_1^j)\ot v) = \varphi\Big(\big(h_1^{j+1}+(\a_1\xi_1-j) h_1^j+j\eta_1h_1^{j-1}\big)\ot v\Big)\\
                 = & G_2(g_j(h_2))\ot w = \Big(h_2g_j(h_2)+\a_2\xi_2g_j(h_2)-h_2g_j'(h_2)+\eta_2g'_j(h_2)\Big)\ot w,
%                 = & t\xi_2 g_n(h_2)+h_2(\a_2)g_n(h_2)-t(g_{n}(h_2))'\\
%                 = & h_2g_n(h_2)+\a_2\xi_2g_n(h_2)-h_2g_{n}'(h_2)+\eta_2g_{n}'(h_2)\\
\end{split}\end{equation}
where $g_j'$ is the derivative of the polynomial $g_j$. Then we see
that $\varphi(h_1^{j+1}\ot v)=g_{j+1}(h_2)\ot w$ for a suitable
polynomial $g_{j+1}$. The claim follows by induction.

\noindent\textbf{Claim 3.} $\varphi(s^ih_1^{j}\ot
v)=s^ig_{j}(h_2)\ot w,\ \forall\ i,j\in\Z_+$, where $g_j$ are
defined as in \eqref{g_n-def}.

From the equations \eqref{phiF} and \eqref{phiG} and noticing
$\a_1\xi_1=\a_2\xi_2$, we deduce
\begin{equation}\label{g_n-1}\begin{split}
g_j'(h_2)=jg_{j-1}(h_2),\ &\
\end{split}\end{equation}
and
\begin{equation}\label{g_n-2}\begin{split}
 g_{j+1}(h_2)-jg_j(h_2)+\eta_1 jg_{j-1}(h_2) = h_2g_j(h_2)-jh_2g_{j-1}(h_2)+\eta_2jg_{j-1}(h_2).
\end{split}\end{equation}
There exists a unique sequence of polynomials $g_j(x)$ in $x$,
$j\in\Z_+$ satisfying \eqref{g_n-1}, \eqref{g_n-2} and $g_0(x)=1$,
which are just those defined in \eqref{g_n-def}, thanks to
\eqref{bn-property-1} and \eqref{bn-property-2}.

Now suppose the claim holds for some $i,j\in\Z_+$. For $m\geq K$, we
consider
\begin{equation*}\begin{split}
\varphi(d_m(s^ih_1^j\ot v))= & \varphi\Big(\l^m(s-m)^i\big(sh_1^j+mG_1(h_1^j)-m^2\a_1F_1(h_1^j)\big)\ot v\Big)\\
                        = & d_m(\varphi(s^ih_1^j\ot v))= d_m(s^ig_j(h_2)\ot w)\\
                        = & \l^m(s-m)^i\big(sg_j(h_2)+mG_2(g_j(h_2))-m^2\a_2F_2(g_j(h_2))\big)\ot w.
%                        = & \l^m(s-m)^is\phi(h_1^n)+m\phi(G_1(h_1^n))-m^2\phi(\a_1F_1(h_1^n))\\
\end{split}\end{equation*}
Regard the expressions in the above equation as polynomials in $m$
and comparing the constant terms, we deduce $\varphi(s^{i+1}h_1^j\ot
v)=s^{i+1}g_j(h_2)\ot w$. The assertion follows by induction. %By induction, we have $\phi(s^{i}h_1^j\ot
%v)=s^{i}g_j(h_2)\ot w$ for all $i\in\Z_+$. Then the above equation
%becomes
%\begin{equation*}\begin{split}
%\phi(d_m(s^ih_1^j\ot v))= & \phi\Big(\l^m(s-m)^i\big(sh_1^j+mG_1(h_1^j)-m^2\a_1F_1(h_1^j)\big)\ot v\Big)\\
%                        = & d_m(\phi(s^ih_1^j\ot v))= d_m(s^ig_j(h_2)\ot w)\\
%                        = & \l^m(s-m)^i\big(sg_j(h_2)+mG_2(g_j(h_2))-m^2\a_2F_2(g_j(h_2))\big)\ot w,
%\end{split}\end{equation*}
%which implies that $\varphi(s^ih_1^j\ot v)=s^{i}g_j(h_2)\ot w$ for
%all $i,j\in\Z_+$.

\noindent\textbf{Claim 4.} There exists a $\Vir$-module isomorphism
$\tau: V_1\rightarrow V_2$ such that $\varphi(f\ot v)=\phi(f)\ot
\tau(v)$ for all $f\in\C[t,s], v\in V_1$, where $\phi:
\Omega(\l,\a_1,h_1)\rightarrow \Omega(\l,\a_2,h_2)$ is the
$\Vir$-module isomorphism defined by \eqref{phi-def}.

Set $\tau(v)=w$ as in the previous arguments. It is obvious that
$\tau$ is a bijective linear map and we have $\varphi(f\ot
v)=\phi(f)\ot \tau(v)$ for all $f\in\C[t,s], v\in V_1$. Applying
$d_m$, we have
\begin{equation*}\begin{split}
\varphi\big(d_m(f\ot v)\big) = & \varphi\big((d_m f)\ot v+ f\ot (d_m v)\big)\\
                             = & \phi(d_m f)\ot \tau(v)+ \phi(f)\ot \tau(d_m v)\\
%\end{split}\end{equation*}
%and
%\begin{equation*}\begin{split}
   ¡¡¡¡¡¡¡¡¡¡¡¡¡¡¡¡= d_m\varphi(f\ot v) = & d_m\big(\phi(f)\ot \tau(v)\big)\\
                   = & \big(d_m\phi(f)\big)\ot\tau(v)+\phi(f)\ot\big(d_m\tau(v)\big).
\end{split}\end{equation*}
Since $\varphi$ and $\phi$ are both $\Vir$-module homomorphisms, we
have $\tau(d_m v)=d_m\tau(v)$, that is, $\tau$ is a $\Vir$-module
isomorphism.%
%  $$ \varphi(d_{m}(1\otimes v)) = d_{m}(\varphi(1\otimes v))=d_{m}\big(1\otimes \tau(v)\big),\ \forall\ m\in\Z,$$
%or, equivalently,
%   $$ \varphi\big(d_{m}(1)\otimes v\big)+ \varphi\big(1\otimes d_{m}(v)\big) = d_{m}(1)\otimes \tau(v)+1\otimes d_{m}\big(\tau(v)\big),\ \forall\ m\in\Z,$$
%Recall that $d_m(v)=d_m\big(\tau(v)\big)=0$ when $m\geq K$, we have
%$\varphi\big(d_m(1)\otimes v) = d_m(1)\otimes\tau(v)$ for all $m\geq K$, that is,
%    $$
%   \lambda^{m}\varphi\big((s+mh(\alpha)+m\xi t-m^{2}\alpha\xi)\otimes v\big)
%    =  \lambda^{m}(s+mh(\alpha)+m\xi t-m^{2}\alpha\xi)\otimes\tau(v),\ \forall\ m\geq K.
%  $$
%Noticing that the above equation also holds for all $m\in\Z$, that is,
%$$\varphi\big(d_m(1)\otimes v) = d_m(1)\otimes\tau(v),\ \forall\ m\in\Z.$$
%Consequently, we obtain that
%   $$\varphi\big(1\otimes d_{m}(v)\big) = 1\otimes d_{m}\big(\tau(v)\big),\ \forall\ m\in\Z,$$
%yielding that
%   $$\tau\big(d_{m}(v)\big) = d_{m}\big(\tau(v)\big),\ \forall\ m\in\Z.$$
%Since the action of $c$ on both modules are $0$, we see that $\tau: V
%\rightarrow V^{\prime}$ is an isomorphism.
\end{proof}

Note that the proof of Theorem \ref{iso} is also valid if $V$ is a
$1$-dimensional trivial module. So we obtain a  similar result for
the module $\Omega(\l,\a,h)$ as a corollary.

\begin{corollary} Let $\lambda_i\in \mathbb{C}^{*}$, $\deg(h_i) = 1$ and $\alpha_i\neq 0$.
Then $\Omega(\lambda_1,\alpha_1,h_1)$ and
$\Omega(\lambda_2,\alpha_2,h_2)$ are isomorphic if and only if
$\l_1=\l_2, \a_1\xi_1=\a_2\xi_2$ and the isomorphism are just
nonzero multiples of $\phi$ defined in \eqref{phi-def}.
\end{corollary}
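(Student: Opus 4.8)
The plan is to deduce this corollary from Theorem \ref{iso} by specializing the tensor factors to the one-dimensional trivial $\Vir$-module $\C$, on which every $d_i$ and $c$ act as zero. This module is irreducible and each $d_k$ acts locally nilpotently, hence locally finitely, on it, so it meets the hypotheses imposed on $V_1,V_2$ in Theorem \ref{iso}; moreover the map $f\ot 1\mapsto f$ is a $\Vir$-module isomorphism $\Omega(\l,\a,h)\ot\C\cong\Omega(\l,\a,h)$, because $d_m(f\ot 1)=(d_mf)\ot 1$ reproduces exactly the action \eqref{def}. Thus every statement about isomorphisms between the modules $\Omega(\l_i,\a_i,h_i)$ translates verbatim into the corresponding statement about the tensor modules $\Omega(\l_i,\a_i,h_i)\ot\C$, and the corollary becomes the $V_1=V_2=\C$ case of Theorem \ref{iso}.

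For the sufficiency I would invoke Lemma \ref{iso omega} directly: under the standing hypotheses $\a_i\neq0$ and $\deg h_i=1$ one has $\xi_i\neq0$, so $\a_i\xi_i\neq0$; hence whenever $\l_1=\l_2$ and $\a_1\xi_1=\a_2\xi_2$ the common value is automatically nonzero and the map $\phi$ of \eqref{phi-def} is a genuine $\Vir$-module isomorphism. For the necessity and for the description of all isomorphisms I would apply Theorem \ref{iso} with $V_1=V_2=\C$. Since any two copies of the trivial module are isomorphic, the condition ``$V_1\cong V_2$'' in that theorem is automatically satisfied, so it yields that $\Omega(\l_1,\a_1,h_1)\cong\Omega(\l_2,\a_2,h_2)$ holds precisely when $\l_1=\l_2$ and $\a_1\xi_1=\a_2\xi_2$. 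Furthermore Theorem \ref{iso} asserts that every isomorphism has the form $\phi\ot\tau$ with $\tau\colon\C\to\C$ an isomorphism of the trivial module; but such a $\tau$ is merely multiplication by a nonzero scalar, so under the identification $f\ot1\leftrightarrow f$ the isomorphism becomes a nonzero scalar multiple of $\phi$, which is exactly the claimed form.

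The only point that genuinely requires checking is the remark preceding the corollary, namely that the proof of Theorem \ref{iso} survives when $V$ is the trivial module. Inspecting that proof, $V$ enters only through the choice of a nonzero vector $v$ together with an integer $K$ annihilating $v$ and the $w_i$ under all $d_m$ with $m\ge K$, and through the final identification of $\tau$ as a $\Vir$-homomorphism $V_1\to V_2$; for the trivial module one may take $K=1$ and $\tau$ a scalar, and each step of Claims~1--4 goes through unchanged since those arguments used $V$ only via $d_mv=d_mw_i=0$ for large $m$. I expect this verification, rather than any new computation, to be the sole obstacle, and it is routine. One could alternatively bypass the reduction and re-run the argument of Theorem \ref{iso} with the factor $V$ suppressed, but the trivial-module reduction is cleaner and reuses the theorem as a black box.
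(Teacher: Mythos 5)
Your proposal is correct and matches the paper's own argument: the paper proves the corollary precisely by remarking that the proof of Theorem \ref{iso} remains valid when $V$ is the $1$-dimensional trivial module, which is exactly your reduction via $\Omega(\l,\a,h)\ot\C\cong\Omega(\l,\a,h)$ with $\tau$ a nonzero scalar. Your extra care in checking that the trivial module satisfies the hypotheses of Theorem \ref{iso} (so it can be used as a black box) is a slightly tidier packaging of the same idea, not a different route.
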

\section{The irreducible tensor modules are new}

In this section we will compare the irreducible tensor modules with
all other known non-weight irreducible Virasoro modules in
\cite{LZ1, LLZ, MZ2, MW} and \cite{TZ1, TZ2}. Note that modules in
\cite{MW} and \cite{TZ1} are special cases of modules in \cite{TZ2}
respectively.

For any $r\in \mathbb{Z}_{+}, l,m\in \mathbb{Z},$ as in \cite{LLZ},
we denote
$$ \omega^{(r)}_{l,m} = \sum^{r}_{i=0}\binom{r}{i}(-1)^{r-i}d_{l-m-i}d_{m+i} \in U(\Vir).$$

\begin{lemma}\label{omega} Let $\Omega(\l,\a,h)$ and $V$ be the irreducible $\Vir$-modules as
before. Then
\begin{enumerate}
\item For any integer $n$, the action of $d_{n}$ on
$\Omega(\lambda,\alpha,h)$ or $\Omega(\l,\a,h)\otimes V$ is not
locally finite.
\item Suppose $h(t)=\xi t+\eta, \xi\neq0$. For any $f(t,s)\in\Omega(\l,\a,h)$, we have
\begin{equation*}\label{omega_Omega}\begin{array}{l}
\omega^{(r)}_{l,m}(f(t,s)) = 0,\ \forall\ l,m, r\in \mathbb{Z}, r >
4,\end{array}\end{equation*}
$$\omega^{(4)}_{l,m}(f(t,s)) =
24\l^l\a^2\big(\xi^2-2\xi\frac{\partial}{\partial t}
+\frac{\partial^2}{\partial t^2}\big)f(t,s-l)\neq0,\ \forall\
l,m\in\Z.$$

\item For any integer $r>4$, there exists $v\in
V$ and $m,l\in \mathbb{Z}$ such that $$\omega^{(r)}_{l,-m}(f\otimes
v) \neq 0,\ \forall\ f\in\C[s,t]\setminus\{0\}.$$
\end{enumerate}
\end{lemma}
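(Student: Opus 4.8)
The plan is to reduce all three parts to a single closed form for the product $d_{l-k}d_k$ on $\Omega(\l,\a,h)$, and then read off (1)--(3). First I would repackage the action \eqref{def}. Let $T_m$ denote the shift $T_m\big(f(t,s)\big)=f(t,s-m)$, and regard the operators $F,G$ from \eqref{operator} as operators on $\C[t,s]=\Omega(\l,\a,h)$ acting on the $t$-variable only. Writing $s=(s-m)+m$ inside \eqref{def} yields the normal form
\begin{equation*}
d_m=\l^m\,T_m\big(s+m(1+G)-m^2\a F\big),
\end{equation*}
where $s$ also denotes multiplication by $s$. Using $s\,T_k=T_k(s+k)$ together with the fact that $F,G$ commute with every $T_k$ (they act on different variables), the two shifts in $d_{l-k}d_k$ collapse to a single $T_l$, giving
\begin{equation*}
d_{l-k}d_k=\l^l\,T_l\,P(k),\qquad P(k)=\big(s+k+(l-k)(1+G)-(l-k)^2\a F\big)\big(s+k(1+G)-k^2\a F\big).
\end{equation*}
The crucial structural fact is that $P(k)$ is a polynomial in $k$ of degree exactly $4$, with leading coefficient the operator $\a^2F^2$, independent of $l$ and $m$.

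For part (2) I would observe that, with $k=m+i$, the weights $\binom{r}{i}(-1)^{r-i}$ are exactly the $r$-th forward-difference weights, so $\omega^{(r)}_{l,m}=\l^l\,T_l\,(\Delta^rP)(m)$. Since $\deg_kP=4$, the difference $\Delta^rP$ vanishes for $r>4$, which gives the first assertion; and $\Delta^4P=4!\,\a^2F^2=24\a^2F^2$. Substituting $F=\xi-\frac{\partial}{\partial t}$ and applying $T_l$ (which acts only on $s$) produces
\begin{equation*}
\omega^{(4)}_{l,m}\big(f(t,s)\big)=24\l^l\a^2\Big(\xi^2-2\xi\tfrac{\partial}{\partial t}+\tfrac{\partial^2}{\partial t^2}\Big)f(t,s-l),
\end{equation*}
which is nonzero because $\a\neq0$ and $\xi-\frac{\partial}{\partial t}$ is injective on $\C[t,s]$ when $\xi\neq0$. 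For part (1) the same normal form shows $d_n$ raises the $s$-degree by exactly $1$ and scales the leading $s$-coefficient by $\l^n\neq0$; hence $d_n$ is injective and the iterates $d_n^jw$ have strictly increasing $s$-degree. Passing to the associated graded for the $s$-degree filtration, on which $d_n$ acts as $\mathrm{gr}(d_n)\ot\mathrm{id}_V$, transports this to $\Omega(\l,\a,h)\ot V$, so $d_n$ is not locally finite there either.

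The main work is part (3), where the coproduct cross terms must be controlled, and this is the step I expect to be the real obstacle. Since $\omega^{(r)}_{l,n}\in U(\Vir)$ acts on a tensor through the coproduct,
\begin{equation*}
\omega^{(r)}_{l,n}(f\ot v)=\omega^{(r)}_{l,n}(f)\ot v+f\ot\omega^{(r)}_{l,n}(v)+\sum_{i}\binom{r}{i}(-1)^{r-i}\big(d_{l-n-i}f\ot d_{n+i}v+d_{n+i}f\ot d_{l-n-i}v\big),
\end{equation*}
and the first summand vanishes for $r>4$ by part (2). I would then use the structure of $V$ from Theorem \ref{MZ2-2}: for any nontrivial $V$ there is a vector $v_0$ and an integer $p$ with $d_jv_0=0$ for all $j>p$ and $d_pv_0\neq0$ (take $v_0$ a highest weight vector, or $v_0\in N$ with $p=k$ in the induced case). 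Choosing the second index $n=p$ forces $d_{p+i}v_0=0$ for $i\geq1$, so only the $i=0$ term of the first sum survives; taking $l$ sufficiently large makes every $d_{l-p-i}v_0=0$, which kills the second sum, and a short bracket computation using $[d_{l-p},d_p]=(2p-l)d_l+\delta_{l,0}(\cdots)c$ gives $\omega^{(r)}_{l,p}(v_0)=0$ as well. What remains is $\omega^{(r)}_{l,p}(f\ot v_0)=(-1)^r\,d_{l-p}(f)\ot d_pv_0$, which is nonzero for every nonzero $f$ because $d_{l-p}$ is injective on $\Omega(\l,\a,h)$ by part (1) and $d_pv_0\neq0$; relabelling $p=-m$ gives the statement. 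The delicate points are the uniform location of the pair $(v_0,p)$ across the highest weight and induced cases and the verification that the purely $V$-side term $\omega^{(r)}_{l,p}(v_0)$ vanishes for large $l$; the genuinely trivial module $V\cong\C$, for which $\Omega(\l,\a,h)\ot V\cong\Omega(\l,\a,h)$ and no such $p$ exists, is tacitly excluded.
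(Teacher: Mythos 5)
Your proposal is correct, and while your parts (1) and (2) coincide in substance with the paper's argument, your part (3) takes a genuinely different route. For (2), the paper expands $\omega^{(r)}_{l,m}(s^jf)$ term by term and invokes the identity $\sum_{i=0}^r(-1)^{r-i}\binom{r}{i}i^j=0$ for $j<r$; your normal form $d_m=\l^m T_m\big(s+m(1+G)-m^2\a F\big)$ and the collapse $d_{l-k}d_k=\l^l T_l P(k)$ with $\deg_k P=4$ and leading coefficient $\a^2F^2$ is exactly that computation packaged as a finite-difference statement, so this is the same proof in cleaner form (likewise your graded argument for (1) on the tensor factor is a more careful version of the paper's one-line claim). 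The real divergence is in (3). The paper fixes the second index to be $-(r+2)$ and relies on the linear independence of the $r+1$ vectors $d_{-2}v,\dots,d_{-r-2}v$, a nontrivial fact imported from \cite{FF} and \cite{MZ2}; after the vanishings, an $(r+1)$-term sum survives and is nonzero by that independence. You instead anchor the second index at the maximal $p$ with $d_pv_0\neq 0$, so that exactly one cross term $(-1)^r d_{l-p}f\ot d_pv_0$ survives, and its nonvanishing needs only the injectivity of $d_{l-p}$ on $\Omega(\l,\a,h)$ from part (1) --- no structural input about $V$ beyond the existence of $p$. What each buys: the paper's choice works uniformly once the cited independence is granted, whereas yours is more economical in its inputs and exposes the edge case the paper glosses over, namely the trivial module $V\cong\C$, for which part (3) is literally false and must be excluded (the paper's independence claim silently fails there).

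The one point you should write out rather than flag as ``delicate'' is the existence of $p$ in the highest weight case when the highest weight is $0$: there $d_0v_0=0$ and also $d_{-1}v_0=0$ in the irreducible quotient (since $d_{-1}v_0$ is a singular vector of the Verma module), so $p$ ends up being $-2$. The clean way to dispose of this uniformly: for a highest weight vector $v_0$ in a nontrivial irreducible $V$, the set $\{j\in\Z \mid d_jv_0\neq0\}$ is bounded above by $0$ and is nonempty (otherwise $\C v_0$ would be a proper trivial submodule), so its maximum $p$ exists; in the induced case take $p=k$ by conditions (a) and (b) of Theorem \ref{MZ2-1}. This is a half-line fix, not a gap, and with it your argument is complete.
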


\begin{proof} (1). It is clear that $d_n^k(f),
k\in\Z_+$ are linearly independent in $\C[t,s]$ for any $n\in\Z$ and
$f\in\C[t,s]$. So the assertion follows easily.
%
%As we mentioned before, $V$ has to be $V(\theta,h)$ for some
%$\theta,h\in \mathbb{C}$ or Ind$_{\theta}(N)$ defined in \cite{MZ2}.
%we can easily prove that the module $\Omega(\lambda,\alpha,h)$ is
%not a weight module. Considering $d^{k}_{n+1}(1\otimes v)$ for any
%nonzero $v\in V$ and any $k\in \mathbb{N},$  we can deduce that for
%any positive integer $n,$ the action of $\Vir^{(n)}_{+}$ on
%$\Omega(\lambda,\alpha,h)$ is not locally finite by considering
%$d^{k}_{n+1}(1\otimes v)$ have infinite bases.

\noindent(2) For any $f(t)\in\C[t]$ and $j\in\Z_+$, by \eqref{def},
we have
\begin{equation*}
\begin{split}
\omega^{(r)}_{l,m}(s^jf)=& \sum^{r}_{i=0}\binom{r}{i}(-1)^{r-i}d_{l-m-i}d_{m+i}(s^jf)\\
= & \sum^{r}_{i=0}\binom{r}{i}(-1)^{r-i}d_{l-m-i}\l^{m+i}(s-m-i)^j\big(sf+(m+i)G(f)-(m+i)^2\a F(f)\big)\\
=& \sum^{r}_{i=0}\binom{r}{i}(-1)^{r-i}\lambda^{l}(s-l)^j\cdot \\
  &\hskip10pt\Big((s-l+m+i)\big(sf+(l-m-i)G(f)-(l-m-i)^2\a F(f)\big)\\
  &\hskip20pt+(m+i)\big(sG(f)+(l-m-i)G^2(f)-(l-m-i)^2\a FG(f)\big)\\
  &\hskip20pt-(m+i)^2\a\big(sF(f)+(l-m-i)GF(f)-(l-m-i)^2\a F^2(f)\big)\Big).
%&-(l-m-i)\big(t-(l-m-i)\alpha\big)\frac{\partial}{\partial t}\big((s-(l-m-i))f(t,s-l)\big)\\
%&+(s+h_{l-m-i})h_{m+i}f(t,s-l)-(l-m-i)\big(t-(l-m-i)\alpha\big)\frac{\partial}{\partial t}\big(h_{m+i}f(t,s-l)\big)\\
%&-((s+h_{l-m-i})(m+i)\big(t-(m+i)\alpha\big)\frac{\partial}{\partial t}f(t,s-l)\\
%&+(l-m-i)\big(t-(l-m-i)\alpha\big)\frac{\partial}{\partial t}\big((m+i)\big(t-(m+i)\alpha\big)\frac{\partial}{\partial t}f(t,s-l)\big)\Big),\\
\end{split}
\end{equation*}
%where $h_{m}=(h+\alpha\xi)m-\alpha\xi m^{2}.$
%So we can see that the powers of $i$
%involved in the the expansion of $d_{l-m-i}d_{m+i}\big(f(t,s)\big)$ are $i^{j}$ with $j<4$.
%Hence we deduce that $\omega^{(r)}_{l,m}\big(f(t,s)\big) = 0, \forall l,m \in \Z, r>4,$
Using the following identity
\begin{equation}\label{identity} \sum_{i=0}^r(-1)^{r-i}{r\choose
i}i^j=0,\ \forall\ j, r\in\Z_+ {\rm\ with\ } j<r,
\end{equation}
we can easily deduce that $\omega^{(r)}_{l,m}\big(s^jf(t)\big) = 0$
provided $r>4$ and
\begin{equation*}
\begin{split}
\omega^{(4)}_{l,m}(s^jf) = & \sum^{4}_{i=0}\binom{4}{i}(-1)^{4-i}i^4\l^l(s-l)^j\a^2F^2(f)\\%\big(\alpha^{2}\xi^{2}f(t,s-l)+\alpha^{2}\frac{\partial}{\partial t}\frac{\partial}{\partial t}f(t,s-l)\big)i^{4}\\
                         = & 24\l^l(s-l)^j\a^2\big(\xi^2 f-2\xi f'+f''\big).
\end{split}
\end{equation*}
The result of (2) follows from linearity.

\noindent(3). Fix any $r>4$. Take $v$ to be a highest weight vector
in $V$ if $V$ is a highest weight module, otherwise $v$ can be any
nonzero vector in $V.$ From \cite{FF} and \cite{MZ2} we know that
the vectors $v, d_{-2}v, d_{-3}v,\cdots, d_{-r-2}v$ are linearly
independent in $V$ and there exists $K\in\N$ such that these vectors
are annihilated by $d_{m}$ for all $m>K$. For any $l>K$ and $m=r+2$,
we have $\omega^{(r)}_{l,-m}(f)=0$ for any $0\neq f\in\C[s,t]$ by
(2) and
\begin{equation*}
\begin{split}
\omega^{(r)}_{l,-m}(f\otimes v) = & \sum^{r}_{i=0}\binom{r}{i}(-1)^{r-i}d_{l+m-i}d_{-m+i}(f\otimes v)\\
% = & \sum^{r}_{i=0}\binom{r}{i}(-1)^{r-i}d_{l+m-i}(d_{-m+i}(1)\otimes v + 1\otimes d_{-m+i}(v))\\
% = & \sum^{r}_{i=0}\binom{r}{i}(-1)^{r-i}(d_{l+m-i}(d_{-m+i}(1)\otimes v) + d_{l+m-i}(1\otimes d_{-m+i}(v))\\
 = & \sum^{r}_{i=0}\binom{r}{i}(-1)^{r-i}d_{l+m-i}(f)\otimes d_{-m+i}(v)\\
% = & \sum^{r}_{i=0}\binom{r}{i}(-1)^{r-i}\lambda^{l+m-i}(s+(l+m-i)(h(\alpha)+\xi t)-(l+m-i)^{2}\alpha\xi)\otimes d_{-m+i}(v)),
\end{split}
\end{equation*}
which is nonzero since $d_{-m+i}v, i=0,1,\cdots, r$ are linearly
independent.
\end{proof}

\begin{theorem}
The $\Vir$-modules $\Omega(\l,\a,h)$ or
$\Omega(\lambda,\alpha,h)\otimes V$ is not isomorphic to any
irreducible module defined in \cite{MZ2, LZ1, LLZ} or in \cite{TZ2}.
\end{theorem}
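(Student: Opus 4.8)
The plan is to exploit the two invariants furnished by Lemma \ref{omega}: first, that for every integer $n$ the operator $d_n$ fails to act locally finitely on $\Omega(\l,\a,h)$ and on $\Omega(\l,\a,h)\ot V$; and second, the vanishing pattern of the central elements $\omega^{(r)}_{l,m}\in U(\Vir)$, namely that $\omega^{(r)}_{l,m}$ annihilates $\Omega(\l,\a,h)$ exactly when $r>4$, while $\omega^{(4)}_{l,m}$ acts as the nonzero second order operator $24\l^l\a^2(\xi-\partial_t)^2$, whereas on $\Omega(\l,\a,h)\ot V$ no $\omega^{(r)}_{l,m}$ with $r>4$ annihilates the whole module. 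Since each $\omega^{(r)}_{l,m}$ lies in $U(\Vir)$, any statement phrased purely in terms of these operators is preserved by every $\Vir$-module isomorphism, so they serve as genuine invariants.

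First I would dispose of the modules from \cite{MZ2}. By Theorem \ref{MZ2-2} every irreducible highest weight module and every $\Ind_\theta(N)$ is locally finite over some $\Vir^{(k)}_+$, so $d_j$ acts locally finitely on it for all large $j$. This directly contradicts Lemma \ref{omega}(1), which shows that no $d_n$ is locally finite on either of our modules; hence neither $\Omega(\l,\a,h)$ nor $\Omega(\l,\a,h)\ot V$ is isomorphic to a module in \cite{MZ2}.

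Next, for the modules in \cite{LZ1} and \cite{LLZ} I would use the $\omega$-degree. These are rank-one (or closely related) non-weight modules, and a computation analogous to but simpler than Lemma \ref{omega}(2) shows that they are annihilated by $\omega^{(r)}_{l,m}$ for all $r$ exceeding a bound strictly smaller than $4$. Thus $\omega^{(4)}_{l,m}$ annihilates them, while $\omega^{(4)}_{l,m}\neq0$ on $\Omega(\l,\a,h)$ by Lemma \ref{omega}(2); and $\omega^{(r)}_{l,m}$ with large $r$ annihilates them but not $\Omega(\l,\a,h)\ot V$ by Lemma \ref{omega}(3). This separates both of our modules from all modules in \cite{LZ1} and \cite{LLZ}. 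The same count separates $\Omega(\l,\a,h)$ itself from the modules in \cite{TZ2}: these are tensor products of a rank-one Omega module with an infinite dimensional module of the type in \cite{MZ2}, so the argument of Lemma \ref{omega}(3) applies to them verbatim and shows that no $\omega^{(r)}_{l,m}$ annihilates them, whereas $\Omega(\l,\a,h)$ is annihilated once $r>4$.

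The main obstacle is the remaining comparison of $\Omega(\l,\a,h)\ot V$ with a tensor module $\Omega(\mu,b)\ot V'$ from \cite{TZ2}, since both have infinite $\omega$-degree and so cannot be told apart by the vanishing pattern alone. To separate them I would exploit the \emph{value} of $\omega^{(4)}$, not merely its nonvanishing. Choosing $v\in V$ annihilated by all $d_i$ with $i$ large, and taking positive indices $l>m$ large, the positivity makes $d_i$ annihilate $v$, so every contribution of the second factor drops out and $\omega^{(r)}_{l,m}(f\ot v)=\omega^{(r)}_{l,m}(f)\ot v$, where the right side is computed in $\Omega(\l,\a,h)$. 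In particular, by Lemma \ref{omega}(2), $1\ot v$ is a common eigenvector of the operators $\omega^{(4)}_{l,m}$ for all large $l>m$, with $\omega^{(4)}_{l,m}(1\ot v)=24\l^l\a^2\xi^2\,(1\ot v)$ and nonzero eigenvalue; this eigenvector property, together with the precise growth $\l^l$ of the eigenvalue, is preserved by any isomorphism. On $\Omega(\mu,b)\ot V'$, however, the rank-one factor contributes $\omega^{(4)}_{l,m}=0$, so $\omega^{(4)}_{l,m}$ there picks up only the second factor and cross terms; analyzing which vectors can be such common eigenvectors with eigenvalue growing like $\l^l$, after first matching $\l=\mu$ as in the proof of Theorem \ref{iso}, should force $\a^2\xi^2=0$, contradicting $\a\neq0$ and $\deg h=1$. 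Making this eigenvector analysis fully rigorous is, I expect, the most delicate part of the argument.
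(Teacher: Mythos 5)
Your overall strategy coincides with the paper's: local finiteness of the $d_n$ rules out the modules of \cite{MZ2}, and the operators $\omega^{(r)}_{l,m}$ serve as invariants for everything else. Your treatment of \cite{MZ2} and of the modules $A_b$ from \cite{LZ1} is exactly the paper's argument. However, two of your steps fail as stated.

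First, your claim that the modules of \cite{LLZ} are annihilated by $\omega^{(r)}_{l,m}$ for all $r$ exceeding a bound strictly smaller than $4$ is wrong. By the computation $(6.7)$ of \cite{LLZ} (recorded as \eqref{omega_N(M)} in the paper), $\mathcal{N}(M,\beta)$ is killed by $\omega^{(r)}_{l,m}$ only for $r>2k+2$, while $\omega^{(2k+2)}_{l,m}$ acts nontrivially; since $k\geq1$ we have $2k+2\geq4$, so $\omega^{(4)}_{l,m}$ never annihilates these modules. For $k\geq2$ one can still separate them from $\Omega(\l,\a,h)$, because they survive $\omega^{(2k+2)}_{l,m}$ with $2k+2>4$, which kills $\Omega(\l,\a,h)$. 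But for $k=1$ the vanishing pattern of $\mathcal{N}(M,\beta)$ is \emph{identical} to that of $\Omega(\l,\a,h)$ (both are killed by $\omega^{(r)}_{l,m}$ precisely when $r>4$), so your invariant cannot distinguish them. The paper needs a finer argument here: in $\mathcal{N}(M,\beta)$ with $k=1$, the vectors $u$ and $\omega^{(4)}_{l,m}(u)$ are linearly independent for every $u\neq0$ whenever $l\neq0$ (the action shifts $t^i$ to $t^{i+l}$), whereas in $\Omega(\l,\a,h)$ the element $1$ is an eigenvector, $\omega^{(4)}_{l,m}(1)=24\l^l\a^2\xi^2$. (This gap only concerns $\Omega(\l,\a,h)$ itself; your ``large $r$'' argument does dispose of $\Omega(\l,\a,h)\ot V$ versus \cite{LLZ}.)

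Second, in comparing $\Omega(\l,\a,h)\ot V$ with the modules of \cite{TZ2} you only consider a single rank-one factor, $T=\Omega(\mu,b)\ot V'$. The modules of \cite{TZ2} are $T=\big(\bigotimes_{i=1}^k\Omega(\mu_i,b_i)\big)\ot W$ with arbitrary $k$ and distinct $\mu_i$, and for $k\geq2$ your scheme breaks down: it is no longer true that a fixed $\omega^{(r)}_{l,m}$ kills a fixed element of $T$ once $l,m$ are large, because cross terms between two rank-one factors survive; for instance the coefficient of $s_1\ot s_2\ot\cdots\ot w$ in $\omega^{(5)}_{l,m}(1\ot\cdots\ot1\ot w)$ equals $(\mu_2-\mu_1)^5\big(\mu_1^{l-m-5}\mu_2^m-\mu_1^m\mu_2^{l-m-5}\big)$, which is nonzero for infinitely many large $l,m$ (this is \eqref{omega_5.2}). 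The paper exploits exactly this asymmetry: every fixed element of $\Omega(\l,\a,h)\ot V$ is killed by $\omega^{(5)}_{l,m}$ once $l-m-5$ and $m$ are large enough (see \eqref{omega_5.1}), while the element $1\ot\cdots\ot1\ot w$ of $T$ with $k\geq2$ is not; that settles the case you omitted. For your case $k=1$ the idea is sound, but it can be completed far more simply than your eigenvalue-growth analysis: since $\Omega(\mu,b)$ is of type $A_b$, every fixed element of $\Omega(\mu,b)\ot V'$ is annihilated by $\omega^{(4)}_{l,m}$ for all sufficiently large $l,m$ (see \eqref{omega_4.2}), whereas $\omega^{(4)}_{l,m}(1\ot v)=24\l^l\a^2\xi^2(1\ot v)\neq0$; applying a putative isomorphism to $1\ot v$ gives an immediate contradiction, with no need to match $\l=\mu$ first.
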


\begin{proof} For any irreducible modules in \cite{MZ2}, there
exists a positive integer $n$ such that $d_n$ acts locally finitely.
So neither our module $\Omega(\l,\a, h)$ nor $\Omega(\l,\a,h)\otimes
V$ is isomorphic to any irreducible modules constructed in
\cite{MZ2} by Lemma \ref{omega} (1).

%Let us firstly recall an irreducible Virasoro module
%Ind$_{\theta}(N)$ defined in \cite{MZ2}. Let $\Vir_{+} =
%\text{span}\{d_{i}\mid i \in Z_{+}\}.$ Given $N \in \Vir_{+}$-mod
%and $\theta \in \mathbb{C},$ consider the induced module Ind$(N) :=
%U(\Vir)\otimes _{U(\Vir_{+})}N,$ and denote by Ind$_{\theta}(N)$ the
%module Ind$(N)/(c-\theta)$Ind$(N).$ From \cite{MZ2} we know that
%there is an $n \in \mathbb{N}$ such that the action of
%$\Vir^{(n)}_{+}$ on the modules Ind$(N)$ is locally finite, from the
%above Lemma we see that $\Omega(\lambda,\alpha,h)\otimes V$ is not
%isomorphic to any module described in \cite{MZ2}.

Then we consider the irreducible non-weight $\Vir$-module $A_b$
defined in \cite{LZ1}. From the proof of Theorem 9 in \cite{LLZ} or
the argument in the proof of Corollary 4 in \cite{TZ1}, we have
\begin{equation}\label{omega_A}
\omega^{(r)}_{l,m}(A_{b}) = 0,\ \forall\ l, m \in \mathbb{Z}, r\geq
3. \end{equation}
Combining this with Lemma \ref{omega} (2) and (3),
we see easily that $\Omega(\l,\a,h)\not\cong A_b$ and
$\Omega(\l,\a,h)\otimes V\not\cong A_b$.

Now we recall the irreducible non-weight Virasoro modules defined in
\cite{LLZ}. Let $M$ be an irreducible module over the Lie algebra
$\mathfrak{a}_{k} := \Vir_{+}/ \Vir^{(k)}_{+}, k \in \mathbb{N}$
such that the action of $\bar{d_{k}} := d_{k}+\Vir^{(k)}_{+}$ on $M$
is injective, where $\Vir^{(k)}_{+}=\{d_i\ |\ i>k\}$ and
$\Vir_{+}=\spn\{d_i\ |\ i\in\Z_+\}$. For any $\beta\in
\mathbb{C}[t^{\pm1}]\setminus \mathbb{C}$, the $\Vir$-module
structure on $\mathcal{N}(M,\beta) = M\otimes \mathbb{C}[t^{\pm1}]$
is defined by
\begin{equation*}\begin{array}{l}
d_{m}\cdot (v\otimes t^{n}) = (n +
\sum^{k}_{i=0}\frac{m^{i+1}}{(i+1)!}\bar{d_{i}})v\otimes t^{n+m} +v\otimes (\beta t^{m+n}),\\\\
 c\cdot (v\otimes t^{n}) = 0, \ \forall\ m, n \in \mathbb{Z}.
\end{array}\end{equation*} From the computation in $(6.7)$
of \cite{LLZ} we see that
\begin{equation}\label{omega_N(M)}\begin{array}{l}
\omega^{(r)}_{l,m}(\mathcal{N}(M,\beta)) = 0,\ \forall\ l,m \in \mathbb{Z}, r > 2k+2,\\\\
\omega^{(2k+2)}_{l,m}(w\otimes t^{i}) = (2k+2)!(-1)^{k+1}(\bar{d_k}
^2 w)\otimes t^{i+l}\neq 0,\ \forall\ l,m\in\Z.
\end{array}\end{equation}
Combining the first equation of \eqref{omega_N(M)} with Lemma
\ref{omega} (3), we see that $\mathcal{N}(M,\beta)$ is not
isomorphic to $\Omega(\l,\a,h)\ot V$. Similarly, combining the
second equation of \eqref{omega_N(M)} with Lemma \ref{omega} (2), we
see that $\Omega(\l,\a,h)$ is not isomorphic to
$\mathcal{N}(M,\beta)$ provided $k\geq 2$. If $k=1$, we see that $u$
and $\omega^{(4)}_{l,m}(u)$ are linearly independent for any
$u\in\mathcal{N}(M,\beta)$ provided  $l\neq 0$, while
$\omega^{(4)}_{l,m}(1)=24\a^2\xi^2$ in $\Omega(\l,\a,h)$ by Lemma
\ref{omega} (2). We see
$\Omega(\l,\a,h)\not\cong\mathcal{N}(M,\beta)$ in this case.

Finally, we take any irreducible module in \cite{TZ2}, say,
$T=\big(\bigotimes_{i=1}^k\Omega(\mu_i, b_i)\big)\otimes W$, where
$\mu_1,\cdots,\mu_k\in\C^{*}$ are distinct,
$b_1,\cdots,b_k\in\C^{*}$ and $W$ is a $\Vir$-module such that $d_n$
acts locally finitely for sufficiently large $n\in\N$. As vector
spaces $\Omega(\mu_i,b_i)=\C[s_i]$ and the $\Vir$-action on
$\Omega(\mu_i,b_i)$ is given by
$$d_mf(s_i)=\mu_i^m(s_i+mb_i)f(s_i-m), \ \forall\ m\in\Z.$$
Then $T$ is just the tensor product of $\Omega(\mu_1,b_1),
\cdots,\Omega(\mu_k,b_k)$ and $W$. If $W$ is $1$-dimensional and
$k=1$, then this tensor product module is just $\Omega(\mu_1,b_1)$,
which is just a special case of some module $A_b$ we treated
previously. So we assume that $k\geq2$ or $\dim(W)\geq 2$ in the
following.

It was shown in Proposition 7 of \cite{TZ2} that there exist
$l,m\in\Z$ and $w\in W$ such that
$$\omega^{(r)}_{l,-m}(1\otimes\cdots\otimes 1\otimes
w) \neq 0,\ \forall\ r> 4.$$ We remark that this formula holds for
$r>2$ actually. However, it does not matter for us in the  present
argument. In deed, this formula, in whichever version, together with
Lemma \eqref{omega} (2) implies that $\Omega(\l,\a,h)\not\cong
T=\big(\bigotimes_{i=1}^k\Omega(\mu_i, b_i)\big)\otimes W$.

For any nonzero elements $\sum_{i=0}^{n}a_i(t)s^i\otimes v_i\in
\Omega(\l,\a,h)\otimes V$ with $v_i\in V$, there exists $K\in\N$
such that $d_{j}v_i=0$ for all $j>K$ and $i=0,\cdots, n$. Then by
Lemma \ref{omega} (2), we have
\begin{equation}\label{omega_4.1}
\begin{split}
&\omega^{(4)}_{l,m}\sum_{i=0}^{n}a_{i}(t)s^i\otimes v_i
=24\l^l\a^2\sum_{i=0}^{n}(\xi^2a_i(t)-2\xi
a_{i}^{\prime}(t)+a_{i}^{\prime\prime}(t))(s-l)^i\otimes v_i\neq 0,
\end{split}
\end{equation} for all $l-m-4>K$ and $m>K$, and
\begin{equation}\label{omega_5.1}
\omega^{(5)}_{l,m}\sum_{i=0}^{5}a_{i}(t)s^i\otimes v_i=0,\ \forall\
l-m-5>K, m>K.\end{equation}

If $k=1$, write $s=s_1$ for short. Then for any element
$\sum_{i=0}^{n'}s^i\otimes w_i\in \Omega(\mu_1,b_1)\otimes W$ with
$w_i\in W$, there exists $K'\in\N$ such that $d_{j}w_i=0$ for all
$j>K'$ and $i=0,\cdots, n'$. Since $\Omega(\mu_1,b_1)$ is a special
case of $A_b$, using \eqref{omega_A} we get
\begin{equation}\label{omega_4.2}
\omega^{(4)}_{l,m}\sum_{i=0}^{n'}s^i\otimes
w_i=\sum_{i=0}^{n'}\omega^{(4)}_{l,m}s^i\otimes w_i=0,\ \forall\
l-m-4>K', m>K'.\end{equation} The formulas \eqref{omega_4.1} and
\eqref{omega_4.2} indicate that $\Omega(\l,\a,h)\otimes V$ is not
isomorphic to $\Omega(\mu_1,b_1)\otimes W$.

Now suppose that $k\geq 2$. Given any $0\neq w\in W$, there exists
$K''\in\N$ such that $d_{j}w=0$ for all $j\geq K''$. Then for any
$l-m\geq K''+6$ and $m\geq K''$, we have
$$\omega^{(5)}_{l,m}(1\otimes 1\cdots\otimes 1\otimes w)=\sum_{i=0}^{5}{5\choose i}(-1)^{5-i}d_{l-m-i}d_{m+i}(1\otimes 1\cdots\otimes 1)\otimes w.$$
The coefficient of $s_1\otimes s_2\otimes 1\cdots\otimes 1\otimes w$
is
\begin{equation}\label{omega_5.2}
\begin{split}
 \sum_{i=0}^5{5\choose i}(-1)^{5-i}\big(\mu_1^{l-m-i}\mu_2^{m+i}+\mu_1^{m+i}\mu_2^{l-m-i}\big)
% =&\mu_1^{l-m}\mu_2^m\big(\frac{\mu_2-\mu_1}{\mu_1}\big)^{5}+ \mu_1^{m}\mu_2^{l-m}\big(\frac{\mu_1-\mu_2}{\mu_2}\big)^{5}
 =(\mu_2-\mu_1)^5\Big(\mu_1^{l-m-5}\mu_2^m-\mu_1^{m}\mu_2^{l-m-5}\Big),
\end{split}\end{equation}
which is nonzero for infinitely many $l$ and $m$ with $l>2m+5$ and
$m\geq K''$. Therefore, $\omega^{(5)}_{l,m}(1\otimes 1\cdots\otimes
1\otimes w)\neq0$ for infinitely many $l$ and $m$ with $l>2m+5,
m\geq K''$. This together with \eqref{omega_5.1} shows that
$\Omega(\l,\a,h)\otimes V\not\cong T$. %=\big(\bigotimes_{i=1}^k\Omega(\mu_i, b_i)\big)\otimes W$.
\end{proof}

\begin{corollary}  Let $\lambda, \a \in \mathbb{C}^{*}$, $h\in\C[t]$ with $\deg(h) = 1$, and $V$ is an irreducible
$\Vir$-module such that any $d_{k}$ is locally finite on $V$ for
sufficiently large $k$. Then both $\Omega(\lambda,\alpha,h)$ and
$\Omega(\lambda,\alpha,h)\otimes V$ are new non-weight irreducible
$\Vir$ modules.
\end{corollary}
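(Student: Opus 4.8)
The plan is to assemble this corollary from the three results proved above, handling irreducibility, the non-weight property, and newness in turn. First I would obtain irreducibility directly: since $\deg(h)=1$ and $\alpha\neq0$ by hypothesis, Theorem~\ref{CG} gives that $\Omega(\lambda,\alpha,h)$ is irreducible, and feeding this into Theorem~\ref{irre} shows at once that $\Omega(\lambda,\alpha,h)\otimes V$ is irreducible as well.

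Next I would argue that neither module is a weight module. The key observation is that on any weight module $d_0$ acts locally finitely, because every vector is a finite sum of $d_0$-eigenvectors and hence spans, together with its $d_0$-images, a finite-dimensional subspace. I would then invoke Lemma~\ref{omega}(1) with $n=0$, which states precisely that $d_0$ is not locally finite on $\Omega(\lambda,\alpha,h)$ nor on $\Omega(\lambda,\alpha,h)\otimes V$; this contradiction rules out the weight property. One can make the situation transparent by noting from the definition that $d_0$ acts on $\Omega(\lambda,\alpha,h)$ as multiplication by $s$, an operator with no nonzero eigenvectors in $\C[t,s]$.

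Finally, for the newness I would appeal to the preceding Theorem, which already shows that neither module is isomorphic to any irreducible module of \cite{MZ2}, \cite{LZ1}, \cite{LLZ}, or \cite{TZ2}. Since the modules of \cite{MW} and \cite{TZ1} are, as recorded at the start of this section, special cases of those in \cite{TZ2}, the same comparison covers them; and these references together exhaust the known constructions of non-weight irreducible Virasoro modules. Hence both $\Omega(\lambda,\alpha,h)$ and $\Omega(\lambda,\alpha,h)\otimes V$ are new.

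I do not expect a genuine obstacle here, since every ingredient is already in place; the only substantive input is the invariant-theoretic comparison carried out in the preceding Theorem via the operators $\omega^{(r)}_{l,m}$ of Lemma~\ref{omega}. The one point requiring care is conceptual rather than computational: one must ensure that the cited references really do account for all previously known non-weight irreducible $\Vir$-modules, which is what legitimizes the word ``new.''
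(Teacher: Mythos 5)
Your proposal is correct and matches the paper's (implicit) argument: the paper states this corollary without a separate proof precisely because it is the assembly you describe --- irreducibility from Theorem~\ref{CG} and Theorem~\ref{irre}, the non-weight property from Lemma~\ref{omega}(1) (equivalently, $d_0$ acting as multiplication by $s$, which has no eigenvectors), and newness from the comparison theorem of Section~4 together with the remark that the modules of \cite{MW} and \cite{TZ1} are special cases of those in \cite{TZ2}. Your closing caveat --- that ``new'' rests on the cited references exhausting the known non-weight constructions --- is exactly the scope the paper claims for itself.
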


\section{Reformulations of $\Omega(\l,\a,h)\ot V$.}
%
%If $V$ is a highest weight module. We may suppose that
%$V=V(\theta,h)$, the irreducible highest weight module with highest
%weight $(\theta,h)$.

In this section, we will reformulate the tensor modules
$\Omega(\l,\a,h)\ot V$ as certain induced modules when $V$ is an
irreducible highest weight module or an irreducible Whittaker
module. We first recall the definition of the Whittaker modules for
$\Vir$.

Fix any $n\in\Z_+$ and $\ba=(a_{n},\cdots,a_{2n})\in\C$. Recall that
$\Vir_+^{(n-1)}=\spn\{d_i\ |\ i\geq n\}$. We define a
$\Vir_+^{(n-1)}$-module structure on $\C$ as follows:
$$d_i\cdot1=a_i,\ \forall\ n\leq i\leq 2n,\quad\text{and}\quad d_i\cdot 1=0,\ \forall\ i>2n.$$
Now we have the $\Vir$-module:
$$V_{\ba,\theta}=U(\Vir)\otimes_{U(\Vir_+^{(n-1)})}\C/(c-\theta)U(\Vir)\otimes_{U(\Vir_+^{(n-1)})}\C.$$
In the following, we will still write $x\cdot 1$ instead of $x\ot 1$
for any $x\in U(\Vir)$. When $n\geq 1$, $V_{\ba,\theta}$ is just the
Whittaker modules studied in \cite{LGZ} and when $n=0$, the module
$V_{a_0,\theta}$ is the Verma module with highest weight $a_0$ and
central charge $\theta$ (See \cite{KR}).

Fix any $\l\in\C^*$. As in \cite{MW, TZ1}, let
$\fb_{\l,n+1}=\spn\{d_k-\l^{k-n}d_{n} \ |\ k\geq n+1\}$ be the
subalgebra of $\Vir$. For any $\a\in\C, h\in\C[t]$, we can define a
$\fb_{\l,n+1}$-module structure on $\C[t]$ as follows
\begin{equation}\label{b_lambda}
(d_{k}-\lambda^{k-n}d_{n})\circ f= %\left\{\begin{array}{ll}
(k-n)\l^k\big(G(f)-(k+n)\a F(f)\big)+(a_k-\l^{k-n}a_n)f,%&
%\forall\ k\leq
%2n.\\\\
%(k-n)\l^k\big(h(\a)f+(t-(k+n)\a)F(f)\big)-\l^{k-n}a_nf,& \forall\
%k>2n.
%\end{array}\right.
\end{equation}
where we make the convention that $a_k=0$ provided $k>2n$. Denote
this module as $\C[t]_{\a,h,\ba}$.

\begin{proposition}
The $\fb_{\l,n+1}$-module $\C[t]_{\a,h,\ba}$ is irreducible if and
only if $\a\neq0$ and $\deg(h)=1$.
\end{proposition}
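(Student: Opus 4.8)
The plan is to reduce the irreducibility question for the $\fb_{\l,n+1}$-module $\C[t]_{\a,h,\ba}$ to a statement about two operators on $\C[t]$, exactly as in the proofs of Theorem \ref{CG} and Proposition \ref{technique}. First I would note that for $k>2n$ one has $a_k=0$, so the element $\l^{-k}(d_k-\l^{k-n}d_n)\circ f$ is, by \eqref{b_lambda}, a polynomial of degree $2$ in $k$ whose $k^2$-, $k^1$- and $k^0$-coefficients are $-\a F(f)$, $G(f)$, and a combination of $G(f),F(f),f$. Since there are infinitely many such $k$, a Vandermonde argument shows that any $\fb_{\l,n+1}$-submodule $W$ is stable under $F$ and $G$ of \eqref{operator}, \emph{provided} $\a\neq0$ (this is where $\a\neq0$ enters, to divide by $\a$). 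Conversely, writing $M$ for multiplication by $t$, one reads off from \eqref{operator} that $G=MF+h(\a)\,\mathrm{id}$, so any subspace stable under $F$ and $G$ is automatically a submodule. Hence, for $\a\neq0$, the submodules of $\C[t]_{\a,h,\ba}$ are precisely the subspaces of $\C[t]$ invariant under both $F$ and $MF$. The two facts I would keep at hand are the Heisenberg relation $[F,M]=-\mathrm{id}$, immediate from $F=(\text{multiplication by }\tfrac{h(t)-h(\a)}{t-\a})-\frac{d}{dt}$, and the elementary observation that $F$ raises degree by $\deg h-1$ while $M$ raises it by $1$.

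For the ``if'' part I assume $\a\neq0$ and $\deg h=1$, say $h=\xi t+\eta$ with $\xi\neq0$, so that $F(f)=\xi f-f'$. Given $0\neq f\in W$, stability under $F$ gives $f'=\xi f-F(f)\in W$, so $W$ is closed under differentiation and thus contains a nonzero constant, i.e.\ $1\in W$. Then $G(1)=h(\a)+\xi t\in W$ forces $t\in W$, and the identity $G(t^j)=h(\a)t^j+\xi t^{j+1}-jt^j$ lets me pull $t^{j+1}$ into $W$ by induction. Hence $W=\C[t]$, which is the direct analogue of the bootstrapping in the proof of Theorem \ref{irre}.

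For the converse I argue by contrapositive, exhibiting a proper nonzero submodule in each remaining case. If $\a=0$, only $G$ occurs in \eqref{b_lambda}, and $G(f)=h(0)f+tF(f)$ has no constant term whenever $f(0)=0$; hence $t\C[t]$ is a proper submodule. If $\a\neq0$ and $\deg h=0$, then $\tfrac{h(t)-h(\a)}{t-\a}=0$, so $F=-\frac{d}{dt}$ and $\ker F=\C\cdot1$ is a one-dimensional submodule (it is $MF$-invariant because $MF$ annihilates it). If $\a\neq0$ and $\deg h\geq2$, I claim $\mathrm{im}(F)$ is the required submodule: it is clearly $F$-invariant, and the Heisenberg relation gives, for every $u\in\mathrm{im}(F)$, that $MF(u)=FM(u)+u=F(tu)+u\in\mathrm{im}(F)$, so it is $MF$-invariant as well. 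Since $F$ raises degree by $\deg h-1\geq1$, the polynomials $F(t^j)$, $j\geq0$, have distinct consecutive degrees $\deg h-1,\deg h,\deg h+1,\dots$, whence $\mathrm{im}(F)$ has codimension $\deg h-1\geq1$ and is a proper nonzero submodule.

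The routine parts are the Vandermonde extraction and the degree count for the codimension of $\mathrm{im}(F)$; the one step requiring the right idea is the case $\deg h\geq2$, where the correct submodule is $\mathrm{im}(F)$ and its invariance under the second operator rests entirely on the commutator identity $[F,M]=-\mathrm{id}$. I expect this to be the main obstacle, since the naive candidates (such as $t\C[t]$ or a degree-bounded subspace) fail there; the unifying principle is that $\ker F$ and $\mathrm{im}(F)$ together account for exactly the cases in which $F$ fails to be bijective on $\C[t]$, and $F$ is bijective precisely when $\deg h=1$.
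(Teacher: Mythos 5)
Your proof is correct and follows essentially the same route as the paper's: a Vandermonde argument on $(d_k-\l^{k-n}d_n)\circ f$ shows that any submodule is stable under $G$, and also under $F$ when $\a\neq0$; the case $\a\neq0$, $\deg h=1$ is then settled by the same bootstrap (differentiate down to $1\in W$, then climb back up using $G$); and the degenerate cases are handled by exhibiting proper submodules, two of which ($t\C[t]$ for $\a=0$, and $F(\C[t])$ for $\a\neq0$, $\deg h\geq 2$) are exactly the paper's. The differences are minor but worth recording. For constant $h$ you use $\ker F=\C\cdot 1$ where the paper uses the subspaces of polynomials of degree at most $i$; both are correct. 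More usefully, where the paper disposes of the case $\a\neq0$, $\deg h\geq2$ by citing Theorem 3.8 of \cite{CG}, you give a self-contained verification that $\mathrm{im}(F)$ is a submodule, via the commutator identity $[F,M]=-\mathrm{id}$ (which gives $MF(u)=F(tu)+u\in\mathrm{im}(F)$, hence $G$-invariance since $G=h(\a)\,\mathrm{id}+MF$), together with the degree count showing the codimension is $\deg h-1\geq1$. Your organizing observation --- that for $\a\neq0$ the submodules are precisely the subspaces invariant under $F$ and $MF$, so that everything hinges on whether $F$ is bijective on $\C[t]$ --- is implicit in the paper but never stated, and it is what makes your write-up self-contained.
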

\begin{proof} When $\a=0$, then \eqref{b_lambda} becomes
$$(d_{k}-\lambda^{k-n}d_{n})\circ f= (k-n)\l^kG(f)+(a_k-\l^{k-n}a_n)f,$$ and it is easy to see that
$t^i\C[t]$ is a $\fb_{\l,n+1}$-submodule of $\C[t]_{\a,h,\ba}$ for
any $i\in\Z_+$. When $h\in\C$, then \eqref{b_lambda} becomes
$$(d_{k}-\lambda^{k-n}d_{n})\circ f= (k-n)\l^k\big(h(\a)f-tf'+(k+n)\a f'\big)+(a_k-\l^{k-n}a_n)f,$$ and
the subspace of $\C[t]$ consisting of all polynomials with degree no
larger than $i$ is a $\fb_{\l,n+1}$-submodule for any $i\in\Z_+$.
When $\a\neq0$ and $\deg(h)\geq 2$, we have that $F(\C[t])=\{F(f)\
|\ f\in\C[t]\}$ is a proper $\fb_{\l,n+1}$-submodule of
$\C[t]_{\a,h,\ba}$ (cf. Theorem 3.8 of \cite{CG}).

Now suppose that $\a\neq0$ and $h=\xi t+\eta$ for some
$\xi,\eta\in\C$ with $\xi\neq 0$. Let $W$ be a nonzero submodule of
$\C[t]_{\a,h,\ba}$. Take any nonzero $f\in W$, by \eqref{b_lambda},
we have $G(f)-(k+n)\a F(f)\in W$ for all $k\geq n+1$. By the
Vandermonde's determinant, we obtain that $F(f)=\xi f-f'\in W$ and
$G(f)=h(\a)f+t(\xi f-f')\in W$, which implies $f'\in W$ and $t(\xi
f-f')\in W$. Then we can deduce that $1\in W$ by downward induction
on the degree of $f$ and $\C[t]\subseteq W$ by upward induction.
Hence $\C[t]_{\a,h,\ba}$ is an irreducible $\fb_{\l,n+1}$-module.
\end{proof}

Now fix any $\a\neq0$ and $h=\xi t+\eta$ for some $\xi,\eta\in\C$
with $\xi\neq 0$. We can form the induced $\Vir$-module as
$$\Ind_{\theta,\l}(\C[t]_{\a,h,\ba})=U(\Vir)\otimes_{U(\fb_{\l,n+1})}\C[t]_{\a,h,\ba}/(c-\theta)U(\Vir)\otimes_{U(\fb_{\l,n+1})}\C[t]_{\a,h,\ba}.$$
Then we have the following:

\begin{theorem} Let notation as above, then
$\Omega(\l,\a,h)\otimes V_{\ba,\theta}\cong
\Ind_{\theta,\l}(\C[t]_{\a,h,\ba})$.
\end{theorem}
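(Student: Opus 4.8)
The plan is to construct an explicit $\Vir$-module isomorphism between the two spaces by exploiting the fact that both are built from the same polynomial data, and that both are determined by their action on a cyclic/generating set. The natural strategy is to define a map out of the induced module $\Ind_{\theta,\l}(\C[t]_{\a,h,\ba})$ into the tensor product $\Omega(\l,\a,h)\ot V_{\ba,\theta}$ using the universal property of induction, check it is a well-defined $\Vir$-module homomorphism, and then verify it is bijective.

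First I would identify the correct target for the generating subspace. Inside $\Omega(\l,\a,h)\ot V_{\ba,\theta}$, consider the subspace $\C[t]\ot(1)$, where $1=1\ot1$ is the cyclic generator of $V_{\ba,\theta}$. The key computation is to show that $\C[t]\ot 1$ is stable under the subalgebra $\fb_{\l,n+1}=\spn\{d_k-\l^{k-n}d_n\mid k\geq n+1\}$ and that, as a $\fb_{\l,n+1}$-module, it is isomorphic to $\C[t]_{\a,h,\ba}$. Concretely, for $f\in\C[t]$ one computes $(d_k-\l^{k-n}d_n)(f\ot1)$ using the tensor action $d_k(f\ot1)=d_k f\ot 1+f\ot d_k\cdot1$ together with the module formula \eqref{def} for $\Omega(\l,\a,h)$ and the Whittaker action $d_i\cdot1=a_i$ on $V_{\ba,\theta}$. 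The terms involving $s$ must cancel in the combination $d_k-\l^{k-n}d_n$ (since the generator is $f=f(t)$ with $s$-degree $0$, this amounts to matching the $(s-k)^0$ versus $(s-n)^0$ contributions), leaving precisely the right-hand side of \eqref{b_lambda}. This matching is the heart of the definition of $\fb_{\l,n+1}$ and of the module $\C[t]_{\a,h,\ba}$, so it should come out exactly; I expect this to be the most delicate bookkeeping step, since one must track the $G$ and $F$ operators and the $(k\pm n)\a$ coefficients carefully.

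Having identified $\C[t]\ot1\cong\C[t]_{\a,h,\ba}$ as $\fb_{\l,n+1}$-modules, the universal property of the induced module gives a $\Vir$-module homomorphism
\begin{equation*}
\Psi:\ \Ind_{\theta,\l}(\C[t]_{\a,h,\ba})\ \longrightarrow\ \Omega(\l,\a,h)\ot V_{\ba,\theta}
\end{equation*}
sending the generators to $\C[t]\ot1$; one checks that the central element $c$ acts by $\theta$ on both sides (using $c(f(t)s^i)=0$ on $\Omega(\l,\a,h)$ and $c=\theta$ on $V_{\ba,\theta}$), so $\Psi$ is well defined on the quotient. To prove surjectivity, I would show that the image of $\Psi$, being a $\Vir$-submodule containing $\C[t]\ot1$, must be everything: applying the $d_m$ for various $m$ to $\C[t]\ot1$ generates all of $\C[t,s]\ot v$ for $v$ running through a generating set of $V_{\ba,\theta}$, by an argument parallel to the irreducibility proof in Theorem \ref{irre} (one first builds up the $s$-powers via repeated application, then sweeps out all of $V_{\ba,\theta}$ using its cyclicity under $\Vir$).

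The final and main obstacle is injectivity, or equivalently a dimension/character count showing $\Psi$ is an isomorphism rather than merely a surjection. The clean way is to compare graded dimensions: both sides carry a natural filtration (by PBW degree on the induced side, and by total polynomial degree combined with the $V_{\ba,\theta}$-filtration on the tensor side), and I would exhibit a basis on each side indexed by the same combinatorial data — monomials $d_{-j_1}\cdots d_{-j_p}\cdot(\text{generator})$ matched against $s^i$-monomials tensored with PBW monomials in $V_{\ba,\theta}$. Since $V_{\ba,\theta}$ itself is a cyclic induced module with a PBW basis, and $\Omega(\l,\a,h)=\C[t,s]$ is free of infinite rank over $\C[t]$ in the variable $s$, the two sides have matching graded dimensions, so the surjection $\Psi$ is forced to be injective. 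I expect this comparison to be the genuinely technical part; the module-homomorphism verification, while computationally heavy, is routine once the $\fb_{\l,n+1}$-action on $\C[t]\ot1$ has been identified with \eqref{b_lambda}.
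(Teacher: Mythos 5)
Your construction of the map and your surjectivity argument coincide with the paper's proof. The key identity you isolate, namely that for $f\in\C[t]$ the $s$-terms cancel in the combination $d_k-\l^{k-n}d_n$ so that
\[
(d_k-\l^{k-n}d_n)(f\ot 1)=\big((d_k-\l^{k-n}d_n)\circ f\big)\ot 1,\qquad k\geq n+1,
\]
is exactly the paper's observation \eqref{observ}, and the paper likewise first produces the powers of $s$ by applying $d_n$ to $t^i\ot 1$ and then sweeps out the rest of $\Omega(\l,\a,h)\ot V_{\ba,\theta}$ by applying PBW monomials; packaging the homomorphism through the universal property of induction, rather than defining it on PBW monomials and verifying equivariance by hand, is only a cosmetic difference.

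The genuine gap is your injectivity step. The inference ``the two sides have matching graded dimensions, so the surjection $\Psi$ is forced to be injective'' fails here, because none of the filtrations you describe has finite-dimensional pieces: for fixed PBW degree the monomials $d_{l}^{k_l}\cdots d_n^{k_n}\ot t^i$ still range over arbitrarily negative indices $l$ (these are not weight modules, so there is no conformal grading to bound $l$), and the $t$-degree $i$ is also unbounded. A surjective linear map between spaces of the same infinite dimension need not be injective (e.g.\ the shift $x^m\mapsto x^{m-1}$, $1\mapsto 0$, on $\C[x]$), and even granting finite-dimensional filtration pieces $X_N$, $Y_N$ of equal dimension with $\Psi(X_N)\subseteq Y_N$, global surjectivity of $\Psi$ does not yield $\Psi(X_N)=Y_N$, so no piecewise count goes through. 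What is needed, and what the paper actually does, is a triangularity (leading-term) argument: one puts a lexicographic-type total order on the basis
\[
B=\{\,t^{i}s^{k_{n}}\ot(d_{l}^{k_{l}}\cdots d_{n-1}^{k_{n-1}}\cdot 1)\,\}
\]
of the tensor module and checks that
\[
d_{l}^{k_{l}}\cdots d_{n-1}^{k_{n-1}}d_{n}^{k_{n}}(t^{i}\ot 1)
= t^{i}s^{k_{n}}\ot(d_{l}^{k_{l}}\cdots d_{n-1}^{k_{n-1}}\cdot 1)+\text{lower terms}.
\]
Since distinct PBW basis vectors of the induced module then have distinct leading terms, their images are linearly independent, hence form a basis, and injectivity follows. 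Your phrase ``bases indexed by the same combinatorial data'' points in this direction, but without the leading-term computation the mere matching of index sets says nothing about the kernel of $\Psi$.
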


\begin{proof} %Firstly we consider $n=0$. If $n= 0$, then
%$\Omega(\l,\a,h)\otimes V(a_0,\theta)\cong
%\Ind_{\theta,\l}(\C[t]_{\a,h,a_{0}}).$
From the PBW Theorem, we see that the module
$\Ind_{\theta,\lambda}(\mathbb{C}[t^{\pm1}]_{\alpha,h,\ba})$ has a
basis
$$A= \{d_{l}^{k_{l}} d_{l+1}^{k_{l+1}} \cdots d_n^{k_{n}}\otimes t^{i} \ \big|\: k_{l},\cdots,k_{n}\in \mathbb{Z}_{+}, l\in \Z, l\leq n, i\in\Z_+\},$$
and the module $\Omega(\l,\a,h)\otimes V(\ba,\theta)$ has a basis
$$B = \{t^{i}s^{k_{n}}\otimes (d_{l}^{k_{l}}d_{l+1}^{k_{l+1}} \cdots d_{n-1}^{k_{n-1}}\cdot 1) \ \big|\ k_{l},\cdots, k_{n}\in \mathbb{Z}_{+},l\in \Z, l\leq n-1, i\in\Z_+\}.$$
Now we can define the following linear map:
$$\begin{array}{cccc}
\phi: & \Ind_{\theta,\l}(\C[t]_{\a,h,\ba}) & \rightarrow & \Omega(\l,\a,h)\otimes V(\ba,\theta)\\\\
      & d_{l}^{k_{l}}\cdots d_n^{k_{n}}\otimes t^{i} & \mapsto & d_{l}^{k_{l}} \cdots d_n^{k_{n}}(t^{i}\otimes1).
\end{array}$$

\begin{claim}
$\phi$ is a $\Vir$-module homomorphism.
\end{claim}

We first have the following observation by \eqref{b_lambda}:
\begin{equation}\label{observ}\begin{split}
(d_j-\l^{j-n}d_n)(t^i\ot 1) & = \big((d_j-\l^{j-n}d_n)\circ t^i\big) \ot 1,\ \ \forall\ j>n.\\
\end{split} \end{equation}
Then for any $x=d_{l}^{k_{l}} d_{l-1}^{k_{l-1}} \cdots d_n^{k_{n}},
l\leq n, i\in\Z_+$ and $j>n$, we have
\begin{equation*}\begin{split}
\phi(xd_{j}\ot t^i) & = \phi(\l^{j-n}xd_n\ot t^i+x\big(d_{j}-\l^{j-n}d_n\big)\ot t^i\big)\\
                    & = \phi\big(\l^{j-n}xd_n\ot t^i+x\ot (d_{j}-\l^{j-n}d_n)\circ t^i\big)\\
                    & = \l^{j-n}xd_n (t^i\ot 1)+ x\big((d_{j}-\l^{j-n}d_n)\circ t^i\ot 1\big)\\
                    & = xd_j(t^i\ot 1).
\end{split}\end{equation*}
Then for any $j\in\Z$, by the PBW Theorem, we can write $d_jx$ as
$$d_jx=\sum_{r=1}^pc_rx_r+\sum_{r=1}^qb_ry_rd_{j_r}, \ \ c_r,
b_r\in\C$$ such that $x_r\ot t^i, y_r\ot t^i\in A$ and $j_r>n$.
%
%
%$$d_jx\ot t^i=\sum_{r=1}^pc_rx_r\ot t^i+\sum_{r=1}^qb_ry_rd_{j_r}\ot t^i, \ \ c_r,
%b_r\in\C, x_r\ot t^i, y_r\ot t^i\in A$$ such that $j_r>n$.
Then we deduce
\begin{equation*}\begin{split}
\phi(d_{j} x\ot t^i) & = \phi\Big(\sum_{r=1}^pc_rx_r\ot t^i+\sum_{r=1}^qb_ry_rd_{j_r}\ot t^i\Big)\\
                   & = \sum_{r=1}^pc_rx_r(t^i\ot 1)+\sum_{r=1}^qb_ry_rd_{j_r}(t^i\ot 1)\\
                   & = d_jx(t^i\ot 1)=d_j\phi(x\ot t^i).
\end{split}\end{equation*}

\begin{claim}
$\phi$ is a $\Vir$-module isomorphism.
\end{claim}

It is clear that $t^i\ot 1\in \Im(\phi)$ for all $i\in\Z_+$. Then
from $d^j_n(t^i\ot 1)\in \Im(\phi)$ we can deduce that $t^is^j\ot
1\in\Im(\phi)$ for all $j\in\Z_+$ inductively. Then applying
elements of the form $d_{l}^{k_{l}} d_{l+1}^{k_{l+1}} \cdots
d_n^{k_{n}}$ on $\Omega(\l,\a,h)\ot 1$, we can deduce that
$\Omega(\l,\a,h)\ot V_{\ba,\theta}\in \Im(\phi)$ and $\phi$ is
surjective.

%For any $w\in \Omega(\lambda,\alpha,h)\otimes V)$, we have
%\begin{equation*}
%\begin{split}
%w=&\sum c_{i,j,k_{-n}\cdots k_{-1}}t^{i}s^{j}\otimes d_{-n}^{k_{-n}} \cdots d_{-1}^{k_{-1}}\otimes1\\
%=&\sum c_{i,k_{-n}\cdots k_{-1},j}d_{-n}^{k_{-n}} \cdots d_{-1}^{k_{-1}}d_{0}^{j}(t^{i}\otimes1),
%\end{split}
%\end{equation*}
%so we deduce that $\phi$ is a surjective.
%\begin{claim}
%$\phi$ is a injective.
%\end{claim}

To obtain the injectivity of $\phi$, we first define a total order
on $B$ as follows
$$ t^{k_{n+1}}s^{k_{n}}\otimes (d_{l}^{k_{l}} \cdots d_{n-1}^{k_{n-1}}\cdot1) < t^{k'_{n+1}}s^{k'_{n}}\otimes (d_{m}^{k'_{m}} \cdots d_{n-1}^{k'_{n-1}}\cdot1)$$
if and only if there exists $ r\leq n+1$ such that $k_{i}=k'_i$ for
all $i<r$ and $k_r<k'_r$, where we have made the convenience that
$k_i=0$ for $i<l$ and $k'_{i}=0$ for $i<m$. By simple computations
we can obtain that
\begin{equation*}
\begin{split}
d_{l}^{k_{l}} \cdots d_{n-1}^{k_{n-1}}d_{n}^{k_{n}}(t^{i}\otimes1)%\\
%=&d_{-n}^{k_{-n}} \cdots d_{-1}^{k_{-1}}(\sum_{j=0}^{k_{0}}C_{k_{0}}^{j}s^{j}t^{i}\otimes a_{0}^{k_{0}-j})\\
=t^{i}s^{k_{n}}\otimes (d_{l}^{k_{l}} \cdots d_{n-1}^{k_{n-1}}\cdot1) + \text{ lower terms },\ \forall\ k_{l},\cdots,k_n,i\in\Z_+.\\
\end{split}
\end{equation*}
%If there exist nonzero constant $c_{k_{-n},\cdots,k_{0}}$ s.t $\sum c_{k_{-n},\cdots,k_{0}}d_{-n}^{k_{-n}} \cdots d_{-1}^{k_{-1}}d_{0}^{k_{0}}(t^{i}\otimes1)=0$, then we can deduce that
%$$C=\{d_{-n}^{k_{-n}} \cdots d_{-1}^{k_{-1}}d_{0}^{k_{0}}(t^{i}\otimes1):k_{0},k_{-1},\cdots ,k_{-n}\in \mathbb{Z}_{+},n\in \mathbb{Z}_{+}\}$$
%are linearly independent by comparing the degree of leading terms.
Since $B$ is a basis of the module $\Omega(\l,\a,h)\ot V$, so is the
set
 $$\phi(A)=\{d_{l}^{k_{l}} \cdots d_{n-1}^{k_{n-1}}d_{n}^{k_{n}}(t^{i}\otimes1)\ |\ k_{l},\cdots ,k_{n}, i\in \Z_{+}, l\leq n\}.$$
Therefore, $\phi$ is injective, as desired.
\end{proof}

\begin{corollary} Suppose that $n\in\N$. The module $\Ind_{\theta,\l}(\C[t]_{\a,h,\ba})$ is
irreducible if and only if $\deg(h)=1, \alpha\neq0$ and
$a_{2n-1}^2+a_{2n}^2\neq0$.
\end{corollary}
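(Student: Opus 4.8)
The plan is to reduce everything to the tensor product picture. By the preceding theorem we have the isomorphism $\Ind_{\theta,\l}(\C[t]_{\a,h,\ba})\cong\Omega(\l,\a,h)\otimes V_{\ba,\theta}$, so the corollary is equivalent to determining exactly when this tensor product is irreducible. Since the question now involves a tensor product, I expect it to decouple along the two factors: the conditions $\deg(h)=1,\ \alpha\neq0$ should govern the $\Omega$-factor, while the condition $a_{2n-1}^2+a_{2n}^2\neq0$ should govern the Whittaker factor $V_{\ba,\theta}$.

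First I would record the elementary general observation that drives the ``only if'' direction: for any two $\Vir$-modules $A$ and $B$, if $A$ has a nonzero proper submodule $A'$, then $A'\otimes B$ is a nonzero proper submodule of $A\otimes B$, because the diagonal action satisfies $d_i(a'\otimes b)=d_ia'\otimes b+a'\otimes d_ib\in A'\otimes B$; symmetrically for a proper submodule of $B$. Hence irreducibility of $A\otimes B$ forces each factor to be irreducible. Taking $A=\Omega(\l,\a,h)$ and $B=V_{\ba,\theta}$, irreducibility of $\Ind_{\theta,\l}(\C[t]_{\a,h,\ba})$ then implies that both factors are irreducible; by Theorem \ref{CG} the first factor is irreducible precisely when $\deg(h)=1$ and $\alpha\neq0$.

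Next I would pin down the irreducibility condition for the second factor. Because $n\geq1$, the module $V_{\ba,\theta}$ is exactly the Whittaker module of \cite{LGZ}, and the criterion there states that $V_{\ba,\theta}$ is irreducible if and only if $(a_{2n-1},a_{2n})\neq(0,0)$, i.e. $a_{2n-1}^2+a_{2n}^2\neq0$ (both indices are admissible since $n\leq 2n-1\leq 2n$). Granting this, the ``only if'' direction is immediate from the previous paragraph. For the ``if'' direction I would invoke Theorem \ref{irre}: when $\deg(h)=1$ and $\alpha\neq0$ the module $\Omega(\l,\a,h)$ is irreducible, and when $a_{2n-1}^2+a_{2n}^2\neq0$ the module $V_{\ba,\theta}$ is an irreducible $\Vir$-module on which $d_k$ acts locally nilpotently for all large $k$ (being a Whittaker module, it is of the type covered by Theorem \ref{MZ2-2}, cf. \cite{LGZ,MZ2}), so Theorem \ref{irre} yields that $\Omega(\l,\a,h)\otimes V_{\ba,\theta}$ is irreducible. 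Transporting this conclusion back through the isomorphism completes the proof.

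The hard part will be the irreducibility criterion for the Whittaker module $V_{\ba,\theta}$, namely the equivalence ``$V_{\ba,\theta}$ is irreducible $\iff a_{2n-1}^2+a_{2n}^2\neq0$''. If this can be quoted directly from \cite{LGZ}, then all remaining steps are routine. Otherwise one would reprove it: for the nondegenerate case, take a nonzero element of a hypothetical proper submodule, expand it in the PBW basis $\{d_l^{k_l}\cdots d_{n-1}^{k_{n-1}}\cdot1\}$, and repeatedly apply $d_{2n-1}$ and $d_{2n}$ (whose action on the Whittaker vector is controlled by $a_{2n-1},a_{2n}$) to force the cyclic generator $1$ into the submodule; for the degenerate case $a_{2n-1}=a_{2n}=0$ one exhibits an explicit proper submodule. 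Everything else is bookkeeping built on Theorems \ref{CG}, \ref{irre} and \ref{MZ2-2}.
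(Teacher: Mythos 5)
Your proposal is correct and takes essentially the same route as the paper: reduce via the isomorphism $\Ind_{\theta,\l}(\C[t]_{\a,h,\ba})\cong\Omega(\l,\a,h)\otimes V_{\ba,\theta}$ of the preceding theorem, decouple irreducibility of the tensor product into irreducibility of the two factors, then quote Theorem \ref{CG} for the $\Omega$-factor and Theorem 7 of \cite{LGZ} for the Whittaker factor, with Theorem \ref{irre} giving the ``if'' direction. If anything, you are more careful than the paper, which asserts the equivalence ``tensor irreducible if and only if both factors are irreducible'' without spelling out the proper-submodule observation and the local-finiteness check for $V_{\ba,\theta}$ that you supply.
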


\begin{proof}
By the Theorem $5.2$, we know that the module
$\Ind_{\theta,\l}(\C[t]_{\a,h,\ba})$ is irreducible if and only if
$\Omega(\lambda,\alpha,h)\otimes V_{\ba,\theta}$ is irreducible, if
and only if the module $\Omega(\lambda,\alpha,h)$ and
$V_{\ba,\theta}$ are both irreducible. From Theorem $2.2$ we know
that $\Omega(\l,\a,h)$ is simple if and only if $\deg(h)=1$ and
$\alpha\neq 0$. And $V_{\ba,\theta}$ is an irreducible if and only
if $a_{2n-1}\neq0$ or $a_{2n}\neq 0$ by the Theorem $7$ in
\cite{LGZ}. So we complete the proof.
\end{proof}
% Then we gave the generalization of Theorem 10 in \cite{TZ1}.

\noindent X.G.: School of Mathematics and Statistics, Zhengzhou
University, Zhengzhou 450001, Henan, P. R. China. e-mail: {\tt
guoxq@zzu.edu.cn}.

\noindent X.L.: School of Mathematics and Statistics, Lanzhou
University, Lanzhou 730000, Gansu, P. R. China.
and School of Mathematics and Statistics, Zhengzhou
University, Zhengzhou 450001, Henan, P. R. China. Email:
Email: liuxw@zzu.edu.cn

\noindent J.W.: School of Mathematics and Statistics, Zhengzhou
University, Zhengzhou 450001, Henan, P. R. China. e-mail: {\tt
847202401@qq.com}.

\end{document}